\numberwithin{equation}{section}
\newcommand{\sech}{{\rm sech\,}}
\newtheorem{thm}{Theorem}[section]
\newtheorem{prop}[thm]{Proposition}
\newtheorem{lem}[thm]{Lemma}
\theoremstyle{definition}
\newtheorem{defn}[thm]{Definition}
\newtheorem{rem}[thm]{Remark}
\title[complex hyperbolic ultra-parallel triangle groups]{Discreteness of  complex hyperbolic ultra-parallel triangle groups}
\author[W. Liao]{Wei Liao}
\address[W. Liao]{School of Mathematics, Hunan University, Changsha, P.R. China}
\email{liaowei.math@qq.com}
\author[B. Xie]{Baohua Xie}
\address[B. Xie]{School of Mathematics, Hunan University, Changsha, P.R. China}
\address{Greater Bay Area Institute for Innovation, Hunan University, Guangzhou, China}
\email{xiexbh@hnu.edu.cn}
\keywords{Complex hyperbolic ultra-parallel triangle groups, Discrete groups, Klein's combination theorem.}
\begin{document}

\begin{abstract}
We prove that a family of complex hyperbolic $[m_1, m_2, m_3]$-triangle group representations with $m_3 > 0$ is discrete and faithful if and only if the isometry $R_1(R_2R_1)^nR_3$ is non-elliptic for some positive integer $n$. Additionally, we investigate the case where $m_3 = 0$ and, in this context, give improved bounds on the discreteness locus compared to those given by Monaghan, Parker, and Pratoussevitch in 2019.
\end{abstract}
\maketitle

\section{Introduction}
A subgroup of \( \mathrm{PU}(2,1) \) is called a \emph{complex hyperbolic ultra-parallel triangle group} if it is generated by three complex reflections \( R_1, R_2, R_3 \), whose mirrors \( C_1, C_2, C_3 \) are pairwise disjoint in the complex hyperbolic space \( \mathbf{H}_{\mathbb{C}}^2 \). After arranging the mirrors appropriately, we define \( m_j \) as the distance between the mirrors \( C_{j-1} \) and \( C_{j+1} \) for \( j = 1,2,3 \pmod{3}  \), subject to the condition \( m_1 \geq m_2 \geq m_3 \geq 0 \). This group is denoted as the complex hyperbolic ultra-parallel \( [m_1, m_2, m_3] \)-triangle group.

A \emph{complex hyperbolic $[m_1, m_2, m_3]$-triangle group representation} is a representation $\rho$ of the group  
\[
\Gamma = \langle \iota_1, \iota_2, \iota_3 \mid \iota_j^2 = 1,\ j = 1, 2, 3 \rangle
\]
into $\mathrm{PU}(2,1)$ such that $\rho(\iota_j)$ is a complex reflection for each $j = 1, 2, 3$, and the group generated by $R_j = \rho(\iota_j)$ forms a complex hyperbolic ultra-parallel $[m_1, m_2, m_3]$-triangle group.  
A representation $\rho$ is called \emph{discrete} if $\rho(\Gamma)$ is discrete.

The study of discrete and faithful complex hyperbolic $[m_1, m_2, m_3]$-triangle group representations was initiated by Goldman and Parker \cite{gp}. They investigated representations of type \( [0,0,0] \) and partially classified them. Furthermore, they conjectured that such a representation is discrete and faithful if and only if the product of the three generators is non-elliptic. This conjecture, known as the Goldman-Parker conjecture, was later confirmed by Schwartz \cite{sch1}. A new proof was subsequently provided in the combined works of \cite{sch2} and \cite{sch3}.

In the decades since, various types of complex hyperbolic $[m_1, m_2, m_3]$-triangle group representations have been studied. Examples include representations of type \( [2m_3, m_3, m_3] \) \cite{wg}, type \( [m_1, m_1, m_1] \) \cite{vas}, type \( [m_1, m_2, 0] \) \cite{mon, mpp}, and the higher-order case of type \( [m, m, 0; n_1, n_2, 2] \) \cite{pov, pp1, pp2}.

Additionally, the study of another class of triangle group representations has attracted the attention of many researchers: the \emph{complex hyperbolic \( (p_1, p_2, p_3) \)-triangle group representations}, where \( 3 \le p_1 \le p_2 \le p_3 \le \infty \). A \emph{complex hyperbolic \( (p_1, p_2, p_3) \)-triangle group} is a subgroup of \( \mathrm{PU}(2, 1) \) generated by three complex reflections \( R_1 \), \( R_2 \), and \( R_3 \) with angles \( \theta_j \) between the mirrors \( C_{j-1} \) and \( C_{j+1} \), where $\theta_j = \pi/p_j$ for $j = 1, 2, 3 \pmod{3}$. A complex hyperbolic \( (p_1, p_2, p_3) \)-triangle group representation \( \rho \) is a representation of the group
\[
\langle \iota_1, \iota_2, \iota_3 : \iota_j^2 = (\iota_2\iota_3)^{p_1} = (\iota_1\iota_3)^{p_2} = (\iota_1\iota_2)^{p_3} = 1,\,\, j = 1, 2, 3 \rangle
\]
into \( \mathrm{PU}(2, 1) \), such that \( \rho(\iota_j) \) corresponds to the complex reflection \( R_j \) for \( j = 1, 2, 3 \), and the group generated by \( R_j \) forms a complex hyperbolic \( (p_1, p_2, p_3) \)-triangle group.

In \cite{sch4}, Schwartz classified complex hyperbolic $(p_1, p_2, p_3)$-triangle group representations into two types, denoted type $A$ and type $B$. Specifically, a representation is called type $A$ if the element $W_A = R_1 R_3 R_2 R_3$ becomes elliptic before the element $W_B = R_1 R_2 R_3$, and type $B$ if $W_B$ becomes elliptic before $W_A$. Schwartz conjectured that a type $A$ representation is discrete and faithful if and only if $W_A$ is non-elliptic, and a type $B$ representation is discrete and faithful if and only if $W_B$ is non-elliptic. In \cite{gro}, Grossi proved that a complex hyperbolic triangle group representation is of type $A$ if $p_1 < 10$ and of type $B$ if $p_1 > 13$. 

Although a complete proof of Schwartz's conjecture remains elusive, several partial results have been established. Specifically, representations of ideal triangle groups were studied in \cite{sch1, sch2, sch3}, while results for $(3,3,n)$, $(3,3,\infty)$, $(3,n,\infty)$, and $(4,4,\infty)$ triangle groups were obtained in \cite{pwx, pw, xwx, jwx}, respectively. Other discreteness results for unfaithful complex hyperbolic triangle group representations can be found in \cite{der, dpp1, dpp2, kpt1, kpt2, par2, pp, sch5, tho}.

According to the work of Monaghan, Parker and Pratoussevitch \cite{mpp}, the case of complex hyperbolic $[m_1, m_2, m_3]$-triangle group representations differs from that of complex hyperbolic $(p_1, p_2, p_3)$-triangle group representations. In general, whether a complex hyperbolic $[m_1, m_2, m_3]$-triangle group representation is discrete and faithful is  not necessarily determined by the types of isometries \( W_A \) and \( W_B \). This is because Proposition 1 in \cite{mpp} proves that for a large family of complex hyperbolic \( [m_1, m_2, 0] \)-triangle group representations, these representations are discrete and faithful if and only if the element \( w^{(n)} = R_1 (R_2 R_1)^n R_3 \) is non-elliptic for some positive integer \( n \).

In this paper, we primarily focus on the case where \( m_3 > 0 \) and provide a family of complex hyperbolic $[m_1, m_2, m_3]$-triangle group representations. These representations are discrete and faithful if and only if the element \( w^{(n)} \) is non-elliptic for some positive integer \( n \). Additionally, we consider the case where \( m_3 = 0 \) and provide an improvement of the result in \cite{mpp}. In contrast to their approach, the proof presented in this paper is more naturally geometric.

The result is analogous to bounding the deformation space of Fuchsian Schottky groups of genus $2$ with two allowed parabolic elements (i.e.\ Riemann surfaces of genus $2$ with a reflection plane cutting them into three circles, two of which are allowed to be pinched to points). In this analogy, the three parameters $[m_1, m_2, m_3]$ correspond to the real translation lengths of the three boundary hyperbolic elements, and $m_3>0$ corresponds to the condition that one of the circles is not pinched to a point. There is a great deal of interest in the literature on discreteness bounds for $2$-generated Fuchsian groups of this kind going back a long way; see, for instance, Purzitsky~\cite{pur}, Rosenberger~\cite{ros}, and Gilman--Keen~\cite{gk}.

We now present our first main result.

\begin{thm}\label{main thm 2}
    Let \( m_3 > 0 \). There exists a family of semi-analytic sets \( K_n \subset \mathbb{R}^3 \) indexed by \( n \in \{1, \ldots, k_0\} \) with the following properties:
    \begin{enumerate}
    \item[(a)] Every point \((m_1, m_2, \alpha) \in K_n\) corresponds, via an explicit parameterization given in Proposition~\ref{angular}, to three elements \( R_1, R_2, R_3 \in \mathrm{PU}(2, 1) \) such that \(\langle R_1, R_2, R_3 \rangle\) is a representation of a complex hyperbolic \([m_1, m_2, m_3]\)-triangle group with angular invariant \(\alpha\).
    \item[(b)] Each \(K_n\) is defined by \(4+k_0\) inequalities in the quantities \( r_j = \cosh(m_j/2) \) and \( s_j = \sinh (m_j/2) \), as given in Definition~\ref{defn of Kn}.
    \item[(c)] A representation corresponding to a point of \(K_n\) is discrete and faithful if and only if \(R_1(R_2R_1)^nR_3\) is non-elliptic.
\end{enumerate}
\end{thm}

As examples, for $r_3 = 1.01$ (resp. $r_3 = 1.09$) and $k_0 = 3$, we present in Figure~\ref{ultra-picture1.png} (resp. Figure~\ref{ultra-picture2.png}) the plots of the sets $K_1$, $K_2$, and $K_3$ in the $(X, Y)$-coordinates
\begin{equation}\label{(r1, r2)-(X, Y)}
    (X, Y) = \left(\frac{r_1^2 - 1}{r_2^2 - 1} - 1, \frac{1}{r_2^2 - 1}\right),
\end{equation}
as introduced in \cite{mpp}.
Any representations corresponding to the dark region are faithful and discrete for any choice of the angular invariant $\alpha$.
Additionally, computational experiments suggest that even as \( r_3 \) and \( k_0 \) increase, non-empty sets \( K_n \) continue to exist.

We then provide our second result, which gives an improvement over Proposition~1 in \cite{mpp}.

\begin{thm}\label{main thm 3}
Assume that \( m_2 > m_3 = 0 \). There exists a family of semi-analytic sets \( \mathcal{K}_n \subset \mathbb{R}^3 \) for \( n \in \mathbb{Z}^+ \) with the following properties:
\begin{enumerate}
    \item[(a)] Every point \( (m_1, m_2, \alpha) \in \mathcal{K}_n \) corresponds, via the explicit parameterization given in Section~\ref{sec m3=0}, to three elements \( R_1, R_2, R_3 \in \mathrm{PU}(2,1) \) such that \( \langle R_1, R_2, R_3 \rangle \) is a representation of a complex hyperbolic \( [m_1, m_2, 0] \)-triangle group with angular invariant \( \alpha \).
    \item[(b)] Each set \( \mathcal{K}_n \) is defined by three inequalities in the quantities \( r_j = \cosh\!\left(m_j/2\right) \) and \( s_j = \sinh\!\left(m_j/2\right) \), as given in Definition~\ref{defn of KN}.
    \item[(c)] A representation corresponding to a point in \( \mathcal{K}_n \) is discrete and faithful if and only if the element \( R_1 (R_2 R_1)^n R_3 \) is non-elliptic.
\end{enumerate}
\end{thm}

Define
\begin{equation}\label{Pn}
 \mathcal{P}_n
=
\left\{
(X, Y, \alpha) :
\frac{2}{n} \le X \le \frac{2}{n-1}, \ 
nX - n(n+1)Y - 1 \ge 0, \ 
\alpha \in [0, \pi]
\right\},
\end{equation}
where the inequality \( X \le \frac{2}{n-1} \) is omitted in the case \( n = 1 \), and the correspondence between \( (r_1, r_2) \) and \( (X, Y) \) is given in equation~\eqref{(r1, r2)-(X, Y)}. Replacing the set \( \mathcal{K}_n \) in Theorem~\ref{main thm 3} by \( \mathcal{P}_n \), this statement coincides exactly with Proposition~1 of \cite{mpp}. A comparison with Proposition~1 of \cite{mpp} is illustrated in Figure~\ref{ultra for m3=0.png}.

In Section~\ref{sec 7}, we further show that for any given positive integer \( n \ge 2 \), the set \( \mathcal{P}_n \) is the limit, as \( m_3 \) tends to \( 0 \), of an explicit subset of \( K_n \) given in Theorem~\ref{main thm 2}. For \( n = 1 \), we show that any compact subset of \( \mathcal{P}_1 \) is the limit of a subset of \( K_1 \) as \( m_3 \) tends to \( 0 \).

\begin{figure}[htbp]
    \centering
    \includegraphics[scale=0.7]{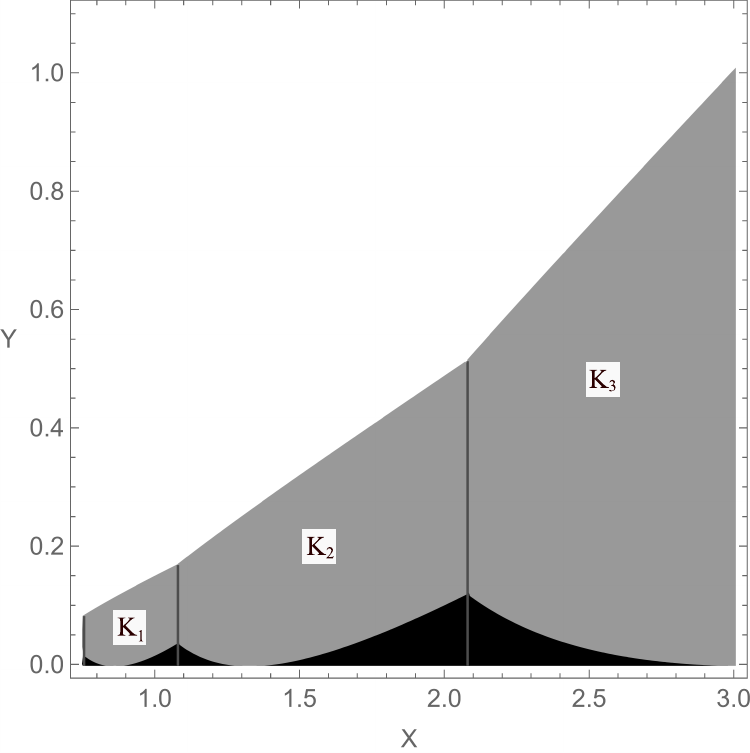}
    \caption{The sets \( K_n \) for \( n = 1, 2, 3 \) with $r_3 = 1.01$ and $k_0 = 3$.}
    \label{ultra-picture1.png}
\end{figure}

\begin{figure}[htbp]
    \centering
    \includegraphics[scale=0.7]{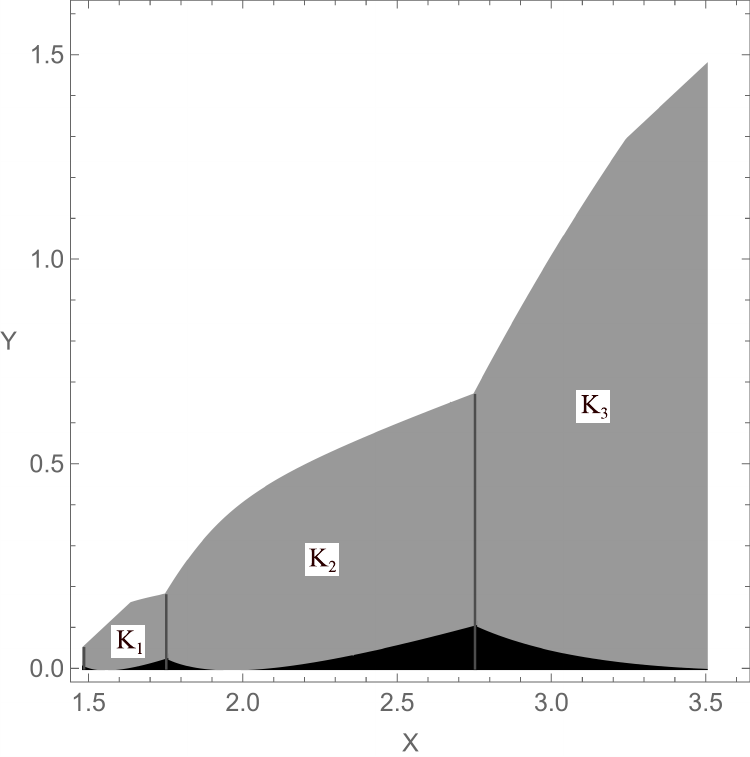}
    \caption{The sets \( K_n \) for \( n = 1, 2, 3 \) with $r_3 = 1.09$ and $k_0 = 3$.}
    \label{ultra-picture2.png}
\end{figure}

\begin{figure}[htbp]
    \centering
    \includegraphics[scale=0.69]{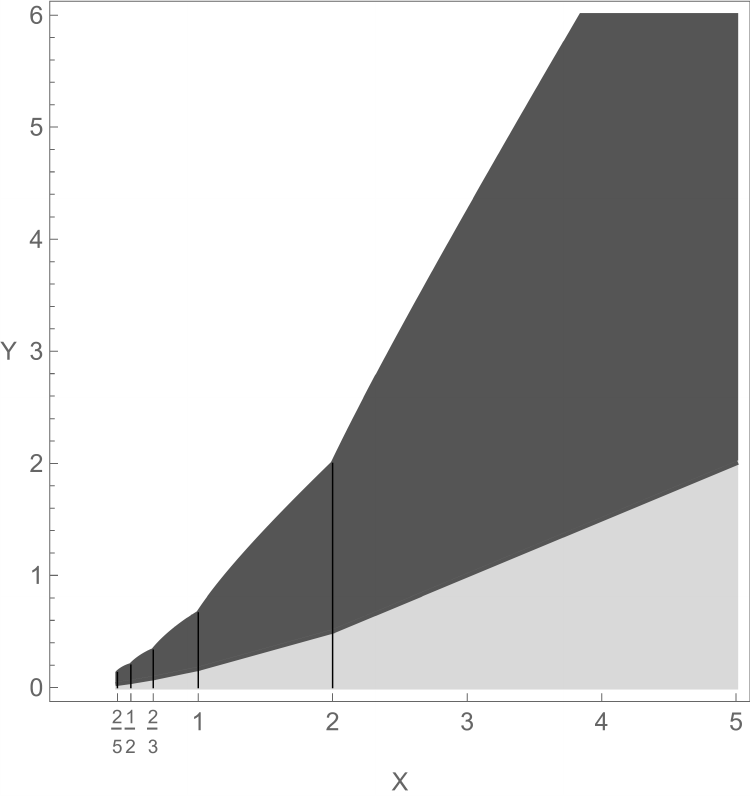}
    \caption{Within each strip, defined by $\frac{2}{n} \le X \le \frac{2}{n-1}$ for $n \in \mathbb{Z}^+$ (with the upper bound omitted for $n = 1$), the combined colored regions (both dark and light gray) represent the set $\mathcal{K}_n$ in Theorem~\ref{main thm 3}. Specifically, the light gray region alone corresponds to the result in Proposition~1 of \cite{mpp}.}
    \label{ultra for m3=0.png}
\end{figure}

In recent decades, Klein's combination theorem has found extensive applications in complex hyperbolic geometry; see, for example, \cite{gp, jx, kp, lx, mon, mpp, wg, sch1, sch2, sch3, xj}. A recent application in real hyperbolic geometry is discussed in \cite{egms}.
We prove Theorem~\ref{main thm 2} and Theorem~\ref{main thm 3} using a variant of Klein's combination theorem. In the proofs of Theorems~\ref{main thm 2} and~\ref{main thm 3}, we use half-spaces bounded by appropriate bisectors or Cygan spheres as fundamental domains for the groups generated by a complex reflection, respectively. For this bisector, we require that the mirror of the complex reflection be a $\mathbb{C}$-slice of the bisector. This construction is motivated by \cite{gil} in real hyperbolic geometry and \cite{rxj} in complex hyperbolic geometry. Other constructions of fundamental domains for groups generated by a single complex reflection also exist. For example, in \cite{sch1} and \cite{wg}, the fundamental domains considered are those bounded by Clifford tori and $\mathbb{R}$-spheres.

This paper is organized as follows: In Section~\ref{sec basic}, we introduce fundamental concepts of complex hyperbolic geometry. In Section~\ref{sec para}, we present a parameterization of complex hyperbolic $[m_1, m_2, m_3]$-triangle group representations. In Section~\ref{sec discreteness result}, we establish a new discreteness result using a variant of Klein's combination theorem. In Sections~\ref{sec m3>0} and~\ref{sec m3=0}, we prove Theorem~\ref{main thm 2} and Theorem~\ref{main thm 3}, respectively. Finally, in Section~\ref{sec 7}, we consider the limiting behaviour of the subset of $K_n$ as $m_3$ tends to $0$.
\medskip

\section{Background}\label{sec basic}
In this section, we provide the basics of complex hyperbolic geometry, most of which can be found in \cite{gol}. For an introduction to complex hyperbolic geometry, see also \cite{par}.
\subsection{The unit ball model of complex hyperbolic space} 
Let $\mathbb{C}^{2, 1}$ represent the complex $3$-dimensional vector space equipped with the Hermitian form
\begin{equation}\label{the first form}
    \langle \mathbf{z}, \mathbf{w} \rangle = z_1 \overline{w}_1 + z_2 \overline{w}_2 - z_3 \overline{w}_3.
\end{equation}
A vector $\mathbf{z} \in \mathbb{C}^{2, 1} \setminus \left\{ 0\right\}$ is called \emph{negative}, \emph{null}, or \emph{positive} if $\langle \mathbf{z}, \mathbf{z} \rangle < 0$, $\langle \mathbf{z}, \mathbf{z} \rangle = 0$, or $\langle \mathbf{z}, \mathbf{z} \rangle > 0$, respectively. In addition, a positive (resp. negative) vector $\mathbf{z}$ is said to be \emph{normalized} if it satisfies $\langle \mathbf{z}, \mathbf{z} \rangle = 1$ (resp. $\langle \mathbf{z}, \mathbf{z} \rangle = -1$).
Let $\mathbb{P}$ denote the projection map.  
The complex hyperbolic space $\mathbf{H}_{\mathbb{C}}^2$ and its boundary $\partial \mathbf{H}_{\mathbb{C}}^2$ are defined as $\mathbb{P}(V_-)$ and $\mathbb{P}(V_0)$, respectively, where
\[
V_- = \left\{ \mathbf{z} \in \mathbb{C}^{2, 1} \setminus \{ 0 \} : \langle \mathbf{z}, \mathbf{z} \rangle < 0 \right\}, \quad
V_0 = \left\{ \mathbf{z} \in \mathbb{C}^{2, 1} \setminus \{ 0 \} : \langle \mathbf{z}, \mathbf{z} \rangle = 0 \right\}.
\]
We denote $z = \mathbb{P}(\mathbf{z})\in \mathbf{H}_{\mathbb{C}}^2$. With slight abuse of notation, in what follows, we do not distinguish between $z$ and $\mathbf{z}$. 
The complex hyperbolic space is endowed with the Bergman metric:
\[
\cosh^2\left( \frac{\rho(z, w)}{2} \right) = \frac{\langle \mathbf{z}, \mathbf{w} \rangle \langle \mathbf{w}, \mathbf{z} \rangle}{\langle \mathbf{z}, \mathbf{z} \rangle \langle \mathbf{w}, \mathbf{w} \rangle}.
\]

Let $\mathrm{PU}(2, 1)$ denote the holomorphic isometry group of $\mathbf{H}_{\mathbb{C}}^2$, consisting of all elements of $\mathrm{GL}(3, \mathbb{C})$ that preserve the Hermitian form. The elements of $\mathrm{PU}(2, 1)$ are classified into three types, based on their fixed points. Specifically, an isometry $g \in \mathrm{PU}(2, 1)$ is \emph{parabolic} if it has exactly one fixed point on $\partial \mathbf{H}_{\mathbb{C}}^2$, \emph{loxodromic} if it has exactly two distinct fixed points on $\partial \mathbf{H}_{\mathbb{C}}^2$, and \emph{elliptic} otherwise. Moreover, the parabolic elements in $\mathrm{PU}(2, 1)$ are further divided into two types: \emph{unipotent parabolic} and \emph{screw parabolic}. A parabolic element is unipotent if it has three equal eigenvalues; otherwise, it is screw parabolic.

\subsection{Complex hyperbolic ultra-parallel triangle groups}

A \emph{complex line} is the image under the projection $\mathbb{P}$ of a two‑dimensional complex vector subspace of $\mathbb{C}^{2,1}$. A \emph{complex geodesic} is the intersection of a complex line with $\mathbf{H}_{\mathbb{C}}^2$, provided this intersection is non‑empty.

The \emph{Hermitian cross-product} is defined as
$$
z \boxtimes w =
\begin{bmatrix}
\overline{z}_2 \overline{w}_3 - \overline{z}_3 \overline{w}_2 \\
\overline{z}_3 \overline{w}_1 - \overline{z}_1 \overline{w}_3 \\
\overline{z}_2 \overline{w}_1 - \overline{z}_1 \overline{w}_2
\end{bmatrix}.
$$
 A non-zero positive vector $ n \in \mathbb{C}^{2, 1} \setminus \{0\} $ is called a \emph{polar vector} of a complex geodesic $C$ if it satisfies $ \langle n, z \rangle = 0 $ for all $ z \in C $. A polar vector is said to be normalized if $ \langle n, n \rangle = 1 $. Conversely, given a positive vector $n$, the intersection of $\mathbb{P}(n^\perp)$ and $\mathbf{H}_{\mathbb{C}}^2$ is also a complex geodesic. Given two distinct complex geodesics $C_1$ and $C_2$ with corresponding normalized polar vectors $n_1$ and $n_2$, respectively, we define $n_{12} = n_1 \boxtimes n_2$. The complex geodesics $C_1$ and $C_2$ are called \emph{ultra-parallel} if they do not intersect in $\mathbf{H}_{\mathbb{C}}^2 \cup \partial\mathbf{H}_{\mathbb{C}}^2$, and \emph{asymptotic} if their intersection lies on $\partial \mathbf{H}_{\mathbb{C}}^2$. Equivalently, they are ultra-parallel (resp. asymptotic) if and only if $n_{12}$ is positive (resp. null). Furthermore, for two ultra-parallel complex geodesics $C_1$ and $C_2$, the distance between them is given by
$$
\cosh\left(\frac{\rho(C_1, C_2)}{2}\right) = |\langle n_1, n_2 \rangle|.
$$

For a complex geodesic $C$ with polar vector $n$, the \emph{complex reflection} with mirror $C$ is defined as
$$
R_C(z) =\mathbb{P}\left(-\mathbf{z}+2\frac{\langle \mathbf{z}, \mathbf{n} \rangle}{\langle \mathbf{n}, \mathbf{n} \rangle}\mathbf{n} \right),
$$
which acts isometrically on the complex hyperbolic space. The set of all fixed points of $R_C$ is the complex geodesic $C$, and therefore, the isometry $R_C$ is elliptic.

Given three complex reflections $R_1$, $R_2$, and $R_3$ with distinct mirrors $C_1$, $C_2$, and $C_3$, if any pair of these mirrors are ultra-parallel or asymptotic, then we call the group generated by $R_1$, $R_2$, and $R_3$ the \emph{complex hyperbolic ultra-parallel $[m_1, m_2, m_3]$-triangle group}, where $m_j = \rho(C_{j-1}, C_{j+1})$ for $j = 1, 2, 3 \pmod{3}$, and, without loss of generality, we assume that $m_1 \ge m_2 \ge m_3\ge 0$. In Section 3, we provide the parameterization of such a group.

\subsection{Bisectors}
Given two distinct points $p$ and $q$ in $\mathbf{H}_{\mathbb{C}}^2$, the \emph{bisector} $\mathcal{B}(p, q)$ is defined as the set  
$$  
\mathcal{B}(p, q) = \left\{ z \in \mathbf{H}_{\mathbb{C}}^2 : \rho(z, p) = \rho(z, q) \right\}.  
$$  
The complex geodesic spanned by $p$ and $q$ is called the \emph{complex spine}, while the intersection of the complex spine with the bisector is called the  \emph{real spine}. A real spine is a geodesic line in $\mathbf{H}_{\mathbb{C}}^2$, and any  geodesic line uniquely determines a bisector.  

Suppose that $C$ is a complex geodesic with polar vector $n$. The orthogonal projection onto this complex geodesic $C$ is given by  
$$  
\Pi_C(z) = \mathbb{P}\left(\mathbf{z} - \frac{\langle \mathbf{z}, \mathbf{n} \rangle}{\langle \mathbf{n}, \mathbf{n} \rangle}\mathbf{n} \right).  
$$  
The bisector admits a $\mathbb{C}$-slice decomposition. Specifically, let $\sigma$ and $\Sigma$ denote its real spine and complex spine, respectively, and let $\Pi_\Sigma$ denote the orthogonal projection. The bisector can then be expressed as  
$$  
\mathcal{B}(p, q) = \bigcup_{z \in \sigma} \Pi_\Sigma^{-1}(z).  
$$  
For each $z \in \sigma$, the set $\Pi_\Sigma^{-1}(z)$ is a complex geodesic, referred to as a $\mathbb{C}$-slice of the bisector.  

A bisector divides $\mathbf{H}_{\mathbb{C}}^2$ into two components. For a complex reflection, if its mirror is a $\mathbb{C}$-slice, it preserves the bisector and interchanges these two components. Consequently, each component can serve as a fundamental domain for the group generated by such a reflection.  

\subsection{The Siegel domain model of complex hyperbolic space}
Replacing the Hermitian form \eqref{the first form} in Subsection $2.1$ with  
$$
\langle z, w \rangle = z_1\overline{w}_3 + z_2\overline{w}_2 + z_3\overline{w}_1,
$$
we obtain that $z \in \mathbf{H}_{\mathbb{C}}^2$ if $\langle \mathbf{z}, \mathbf{z} \rangle < 0$, and $z \in \partial \mathbf{H}_{\mathbb{C}}^2$ if $\langle \mathbf{z}, \mathbf{z} \rangle = 0$. This defines the \emph{Siegel domain model} of complex hyperbolic space, which we utilize exclusively in Section 6.  

Apart from the difference in the formula for the Hermitian cross-product compared to the unit ball model, all other concepts introduced in Subsections $2.1$–$2.3$ remain applicable to the Siegel domain model, differing only in the choice of the Hermitian form.

The Heisenberg group $\mathcal{H}=\mathbb{C}\times \mathbb{R}$ is equipped with the group law
$$
[\zeta_1, t_1]\ast[\zeta_2, t_2]=[\zeta_1+\zeta_2, t_1+t_2+2\Im(\zeta_1\overline{\zeta}_2)].
$$
Then, $\partial\mathbf{H}_{\mathbb{C}}^2$ can be identified with $\mathcal{H}\cup \left\{ \infty \right\}$ by
$$
\begin{bmatrix}
-|\zeta|^2+it \\
\sqrt{2}\,\zeta \\
1
\end{bmatrix}\sim
[\zeta, t]
,\qquad\text{and}\qquad
\begin{bmatrix}
1 \\
0 \\
0
\end{bmatrix}\sim
\infty.
$$
If a unipotent parabolic isometry fixes the point $\infty$, then it can be written as
$$
T_{[\zeta, t]}=
\begin{bmatrix}
 1 & -\sqrt{2}\,\overline{\zeta} & -|\zeta|^2+it\\
 0 & 1 & \sqrt{2}\,\zeta\\
 0 & 0 & 1
\end{bmatrix}.
$$
Moreover, the isometry $T_{[\zeta, t]}$ acts on $\mathcal{H}\cup \left\{ \infty \right\}$ as the left translation by $[\zeta, t]$. We also refer to $T_{[\zeta, t]}$ as the Heisenberg translation by $[\zeta, t]$.

The boundary of a complex geodesic is called an \emph{infinite chain} (or \emph{finite chain}) if it passes (or does not pass) through the point $\infty$. For any infinite chain, its image under the \emph{vertical projection} $\Pi$, given by $[\zeta, t] \mapsto [\zeta, 0]$, is a point on the plane $\mathbb{C} \times \{0\}$. Conversely, for a point $[\zeta, 0]$, its preimage under $\Pi$ is an infinite chain. Thus, an infinite chain can be uniquely determined by a complex number, and we often use $\zeta = \zeta_0$ to denote the infinite chain whose image under the vertical projection is the point $[\zeta_0, 0]$. According to \cite{mpp}, or by direct computation, the polar vector corresponding to a given infinite chain $\zeta = \zeta_0$ is $(-\sqrt{2}\,\overline{\zeta}_0, 1, 0)^T$.

The Heisenberg group is equipped with the \emph{Cygan metric}, which is given by
$$
\rho_0\left([\zeta_1, t_1], [\zeta_2, t_2]\right)=\left||\zeta_1-\zeta_2|^2-it_1+it_2-2i\Im(\zeta_1\overline{\zeta}_2) \right|^{1/2},
$$
and the \emph{Cygan sphere} of radius $r>0$ and center $[\zeta_0, t_0]$ is defined by
$$
\left\{[\zeta, t]\in \mathcal{H} : \rho_0([\zeta, t], [\zeta_0, t_0])=r \right\}.
$$
For a unit circle in the plane $\mathbb{C} \times \{0\}$ that is also a finite chain, the corresponding complex reflection can be expressed as
$$
\begin{bmatrix}
0  & 0 & 1 \\
0  & -1 & 0\\
1  & 0 & 0
\end{bmatrix}.
$$
The Cygan sphere of radius $1$ and center $o = [0, 0]$ is invariant under this complex reflection. Moreover, this reflection maps the exterior of the unit Cygan sphere to the interior of the unit Cygan sphere. 

\medskip


\section{Parameterization}\label{sec para}
In this section, we describe the parameter space of the complex hyperbolic ultra-parallel $[m_1, m_2, m_3]$-triangle groups with $m_3>0$. 
Let $C_j$ for $j = 1, 2, 3$ be three distinct complex geodesics with normalized polar vectors $n_j$, respectively. The distance $m_j$ between two complex geodesics $C_{j-1}$ and $C_{j+1}$ is given by 
$$
\cosh\left(\frac{m_j}{2}\right)=|\langle n_{j-1}, n_{j+1}\rangle|.
$$
For convenience, we denote $r_j=\cosh\left(\frac{m_j}{2}\right)$ and $ s_j =\sinh\left(\frac{m_j}{2}\right)$ for $j=1, 2$ and $3$.
The proof of the following result is analogous to Proposition 1 in \cite{pra}.

\begin{prop}\label{angular}
A complex hyperbolic ultra-parallel $[m_1, m_2, m_3]$-triangle group with $m_3 > 0$ exists if and only if 
\begin{equation}\label{non-degenerate}
    \cos\alpha \le \frac{r_1^2 + r_2^2 + r_3^2 - 1}{2r_1 r_2 r_3},
\end{equation}
where 
$$
\alpha = \arg\left(\langle n_1, n_3 \rangle \langle n_2, n_1 \rangle \langle n_3, n_2 \rangle\right) \in [0, 2\pi].
$$
Moreover, if such an ultra‑parallel $[m_1, m_2, m_3]$-triangle group exists, then, up to the action of $\mathrm{PU}(2,1)$, the corresponding polar vectors of the mirrors $C_1$, $C_2$, and $C_3$ can be normalized as
\begin{equation}\label{parameterization}
  n_1 = \begin{pmatrix}
 0 \\
 r_3  \\
 s_3
\end{pmatrix}, \quad
n_2 = \begin{pmatrix}
 0 \\
 1  \\
 0
\end{pmatrix}, \quad
n_3 = \begin{pmatrix}
 \sqrt{|s_3^{-1}(r_1 r_3 - r_2 e^{-i \alpha})|^2 - r_1^2 + 1} \\
 r_1 \\
 s_3^{-1}(r_1 r_3 - r_2 e^{-i \alpha}) 
\end{pmatrix}.
\end{equation}
\end{prop}
\begin{proof}
    After normalization, we obtain the three normalized polar vectors
\[
n_1=\begin{pmatrix}
 0 \\
 \delta  \\
\gamma
\end{pmatrix}, \quad
n_2=\begin{pmatrix}
 0 \\
 1  \\
0
\end{pmatrix},\quad
n_3=\begin{pmatrix}
 z\\
 \xi \\
\beta 
\end{pmatrix},
\]
where $\beta \in \mathbb{C}$, and $\xi, z, \gamma, \delta \in \mathbb{R}^+$. There are six identities:

\begin{enumerate}
\item[(i)] $|\langle n_3, n_2 \rangle| = \xi = r_1$;
\item[(ii)] $|\langle n_1, n_3 \rangle| = |\delta \xi - \gamma \bar{\beta}| = r_2$;
\item[(iii)] $|\langle n_2, n_1 \rangle| = \delta = r_3$;
\item[(iv)] $\langle n_3, n_3 \rangle = z^2 + \xi^2 - |\beta|^2 = 1$;
\item[(v)] $\langle n_1, n_1 \rangle = \delta^2 - \gamma^2 = 1$;
\item[(vi)] $\arg(\langle n_1, n_3 \rangle \langle n_2, n_1 \rangle \langle n_3, n_2 \rangle) = \alpha$.
\end{enumerate}

It follows from identities (i), (iii), and (v) that we have $\xi = r_1$, $\delta = r_3$ and $\gamma = s_3$.
The condition $m_3 > 0$ implies $s_3 > 0$.
From (ii) and (vi), we have
$$
\delta \xi - \gamma \bar{\beta} = r_2 e^{i\alpha},
$$
where $\alpha \in [0, 2\pi]$. This is equivalent to
$$
\beta = s_3^{-1} (r_1 r_3 - r_2 e^{-i \alpha}).
$$
Then, using identity (iv), we get
$$
z = \sqrt{|s_3^{-1} (r_1 r_3 - r_2 e^{-i \alpha})|^2 - r_1^2 + 1}.
$$
Note that the following condition must be satisfied:
$$
z^2 = |s_3^{-1} (r_1 r_3 - r_2 e^{-i \alpha})|^2 - r_1^2 + 1 \ge 0.
$$
This is equivalent to
$$
\cos\alpha \le \frac{r_1^2 + r_2^2 + r_3^2 - 1}{2r_1 r_2 r_3}.
$$
\end{proof}

\begin{rem}\label{remk angular}
    Following \cite{pra}, the parameter $\alpha$ is known as the angular invariant of the triangle $(C_1, C_2, C_3)$. As noted in the remark following Proposition~1 in Section~3 of \cite{pra}, we assume throughout the paper that $\alpha \in [0, \pi]$.
\end{rem}

Under the normalization as presented in \eqref{parameterization}, a straightforward calculation shows that
\begin{equation}\label{R2R1}
\begin{aligned}
R_1&=\begin{bmatrix}
-1  & 0 & 0\\
0  & r_3^2+s_3^2 & -2r_3s_3 \\
0  & 2r_3s_3 & -r_3^2-s_3^2
\end{bmatrix}, 
& R_2&=\begin{bmatrix}
-1  & 0 & 0\\
0  & 1 & 0\\
0  & 0 & -1
\end{bmatrix},\\
R_3&=\begin{bmatrix}
2z_1^2-1  & 2z_1r_1 & -2z_1\overline{z}_3\\
2z_1r_1 & 2r_1^2-1 & -2\overline{z}_3r_1\\
2z_1z_3 & 2r_1z_3 & -2|z_3|^2-1
\end{bmatrix},
& R_2R_1&=\begin{bmatrix}
1  & 0 & 0\\
0  & r_3^2+s_3^2 & -2r_3s_3 \\
0  & -2r_3s_3 & r_3^2+s_3^2
\end{bmatrix}.
\end{aligned}
\end{equation}
Here, $z_1$ and $z_3$ represent the first and last components of the polar vector $n_3$, respectively. Specifically, we have
$$
z_1^2=\frac{r_1^2+r_2^2+r_3^2-2r_1r_2r_3\cos\alpha-1}{r_3^2-1},\qquad
|z_3|^2=\frac{r_2^2+r_1^2r_3^2-2r_1r_2r_3\cos\alpha}{r_3^2-1}.
$$ 
and $z_3=(r_1r_3-r_2e^{-i\alpha})/s_3$.
Denote by $C_{12}$ the common orthogonal complex geodesic to $C_1$ and $C_2$.  Under the normalization given in \eqref{parameterization}, we can now express its polar vector as $n_{12}=(1, 0, 0)^T$.
The complex geodesics $C_3$ and $C_{12}$ are ultra-parallel if and only if
$$
\frac{\langle n_3, n_{12} \rangle\langle n_{12}, n_3 \rangle}{\langle n_3, n_3 \rangle\langle n_{12}, n_{12} \rangle}=|\langle n_3, n_{12} \rangle|^2>1,
$$
which is equivalent to
\begin{equation}\label{condition on cos alpha}
  \cos\alpha<\frac{r_1^2+r_2^2}{2r_1r_2r_3}. 
\end{equation}
In the following,  we assume that $C_3$ and $C_{12}$ are ultra-parallel.
Suppose that $C_0$ is the common orthogonal complex geodesic to $C_{3}$ and $C_{12}$. Then, a polar vector of $C_0$ is
$$
n_0=s_3(n_3\boxtimes n_{12})=
\begin{pmatrix}
0\\
 r_1r_3-r_2e^{i\,\alpha} \\
s_3r_1
\end{pmatrix}.
$$
Therefore, the intersection of $C_0$ and $C_{12}$ is 
\begin{equation}\label{p3 point 1}
    p_3=n_0\boxtimes n_{12}=
\begin{pmatrix}
0\\
 s_3r_1 \\
r_1r_3-r_2e^{-i\,\alpha} 
\end{pmatrix},
\end{equation}
and the distance $d$ between $C_{12}$ and $C_3$ is given by
$$
\cosh\left(\frac{d}{2}\right)=\sqrt{|s_3^{-1}(r_1 r_3 - r_2 e^{-i \alpha})|^2 - r_1^2 + 1}=\sqrt{M},
$$
where 
\begin{equation}\label{value of M}
    M=\frac{r_1^2+r_2^2+r_3^2-2r_1r_2r_3\cos\alpha-1}{r_3^2-1}.
\end{equation}
Since the complex geodesic $C_{12}$ can be identified with the unit disc in the complex plane $\mathbb{C}$, with slight abuse of notation, we also write 
\begin{equation}\label{p3 point 2}
    p_3=\frac{s_3r_1}{r_1r_3-r_2e^{-i\,\alpha}},
\end{equation}
when working within the complex geodesic $C_{12}$. 
We have
\begin{equation}\label{Re(p_3)}
    \Re(p_3)= \frac{s_3 r_1 (r_1 r_3 - r_2 \cos \alpha)}{(r_1 r_3 - r_2 \cos \alpha)^2 + r_2^2 \sin^2 \alpha},\quad
    \Im(p_3)=\frac{-s_3 r_1 r_2 \sin \alpha}{(r_1 r_3 - r_2 \cos \alpha)^2 + r_2^2 \sin^2 \alpha}.
\end{equation}
It is evident that, under condition~\eqref{condition on cos alpha} and the assumption \( m_1 \ge m_2 \ge m_3 > 0 \), we have \( \Re(p_3) > 0 \) and \( \Im(p_3) \le 0 \).
The latter inequality holds since the angular invariant \( \alpha \) is restricted to the interval \( [0, \pi] \) (see Remark~\ref{remk angular}).

The orthogonal projection of $C_j$ onto $C_{12}$ is a single point, denoted by $p_j$ for $j = 1, 2$. Moreover, we have \( p_1 = ( 0,\, s_3/r_3,\, 1 )^T \) and \( p_2 = ( 0,\, 0,\, 1 )^T \). With a slight abuse of notation, we also write these as $p_1 = s_3/r_3$ and $p_2 = 0$ when working within the complex geodesic $C_{12}$.
\medskip

We now consider the existence of a bisector with \( C_3 \) as its slice, whose orthogonal projection onto the complex geodesic \( C_{12} \) is an open disc, under condition \eqref{condition on cos alpha}. This result will be used in the next section.

\begin{lem}[Section~3.2.2 of \cite{gol}]\label{ultra parallel distance}
    Suppose that \( H_1 \) and \( H_2 \) are two ultra-parallel complex geodesics, and let \( H_0 \) denote the common orthogonal complex geodesic to \( H_1 \) and \( H_2 \). Let \( x = H_0 \cap H_1 \). Then, \( \Pi_1 \) maps \( H_2 \) diffeomorphically onto the geometric ball centered at \( x \) with radius
    \[
    2\tanh^{-1} \sech\left(\frac{\rho(H_1, H_2)}{2}\right),
    \]
    where \( \Pi_1 \) is the orthogonal projection onto \( H_1 \).
\end{lem}

By using Lemma~\ref{ultra parallel distance} we obtain:
\begin{lem}\label{projection onto C23}
    Suppose that the complex geodesics $C_3$ and $C_{12}$ are ultra-parallel, and let $C_0$ denote the common orthogonal complex geodesic to $C_3$ and $C_{12}$. The distance between $C_3$ and $C_{12}$ is given by 
    $$
    d=\rho(C_3, C_{12})=2\cosh^{-1}(|\langle n_3, n_{12} \rangle|)=2\cosh^{-1}(\sqrt{M}),
    $$
    where $M$ is given in equation~\eqref{value of M}.
    Additionally, $\Pi_{12}$ maps $C_3$ onto an open disc (in the hyperbolic metric) centered at $p_3 = C_0 \cap C_{12}$, with radius $d_3$ determined by the equation 
    \begin{equation}\label{d3}
        \cosh(d_3) = \frac{M + 1}{M - 1},
    \end{equation}
    where $\Pi_{12}$ denotes the orthogonal projection onto $C_{12}$.
\end{lem}

\begin{proof}
   From Lemma~\ref{ultra parallel distance}, we have
   \[
   d_3 = 2 \tanh^{-1} \sech\left(\frac{d}{2}\right) = 2 \tanh^{-1}\left(\frac{1}{\sqrt{M}}\right).
   \]
   Solving the system of equations
    \[
    \frac{\sinh(d_3/2)}{\cosh(d_3/2)} = \frac{1}{\sqrt{M}}, \qquad
    \cosh^2\left(\frac{d_3}{2}\right) - \sinh^2\left(\frac{d_3}{2}\right) = 1,
    \]
    we obtain
   \[
   \cosh^2\left(\frac{d_3}{2}\right) = \frac{M}{M - 1}.
   \]
   This leads to
   \[
   \cosh(d_3) = \frac{M + 1}{M - 1}.
    \]
\end{proof}

\begin{lem}\label{C3 existence}
    If the complex geodesics \( C_3 \) and \( C_{12} \) are disjoint, then there exists a bisector \( \mathcal{B}(C_3) \) with \( C_3 \) as its slice such that  
    \( \Pi_{12}(\mathcal{B}(C_3)) = \Pi_{12}(C_3) = S_3 \),  
    where \( S_3 \) is the open disc in \( C_{12} \) of radius \( d_3 \), centered at \( p_3 \), with \( d_3 \) given by equation~\eqref{d3}.
\end{lem}

\begin{proof}
The existence of \( \mathcal{B}(C_3) \) follows from the argument presented in the proof of Lemma~9.1 of \cite{bk}. For the reader’s convenience, however, we briefly outline the method used to construct such a bisector.
Let \( C_0 \) denote the common orthogonal complex geodesic to \( C_3 \) and \( C_{12} \). The intersections \( C_3 \cap C_0 \) and \( C_{12} \cap C_0 \) can be connected by a geodesic segment, denoted by $\sigma$, lying within \( C_0 \). The desired bisector \( \mathcal{B}(C_3) \) is the one whose real spine is the geodesic line in \( C_0 \) that is orthogonal to \( \sigma \) at the point \( C_3 \cap C_0 \).

Note that the bisector \( \mathcal{B}(C_3) \) admits a slice decomposition. The distance between \( C_{12} \) and any slice other than \( C_3 \) is greater than the distance between \( C_{12} \) and \( C_3 \). Consequently, the orthogonal projections of these other slices onto \( C_{12} \) are also discs centered at \( p_3 \), but with smaller radii than \( d_3 \). Therefore, the orthogonal projection of \( \mathcal{B}(C_3) \) onto \( C_{12} \) is precisely \( S_3 \).
\end{proof}

\medskip

\section{Discreteness result}\label{sec discreteness result}
In this section, we prove a discreteness result, that is Proposition~\ref{proposition}, of complex hyperbolic ultra-parallel $[m_1, m_2, m_3]$-triangle groups with $m_3>0$. This result is used to prove Theorem~\ref{main thm 2} in Section $5$.
We first present a version of Klein's combination theorem, together with a preliminary discreteness result in Subsection~\ref{sketch}, followed by the complete proof in Subsection~\ref{the proof}.
Then, in Subsection~\ref{algorithm}, we offer a more detailed discussion of condition~(3) in Proposition~\ref{proposition}. 

\medskip

\subsection{A preliminary discreteness result}\label{sketch}
The following variant of Klein's combination theorem, given in~\cite{lx}, is closely related to the version in~\cite{sch1} and can be regarded as a corollary of Theorem~$\mathrm{VII}.\mathrm{A}.10$ in~\cite{mas}.

\begin{lem}\label{Klein}
 Let $G_{1}$ and $G_{2}$ be subgroups of $\mathrm{PU}(2,1)$ whose union generates the group $G$. If there exist nonempty open subsets $U_1$, $U_2$, $V_1$ and $V_2$ of $\mathbf{H}_{\mathbb{C}}^2$ (or $\partial\mathbf{H}_{\mathbb{C}}^2$) with $U_1 \cap U_2=\emptyset$ and $V_j\subsetneq U_j$ for $j=1, 2$, so that each nontrivial element of $G_1$ maps $U_1$ into $V_2$ and each nontrivial element of $G_2$ maps $U_2$ into $V_1$, then $G$ is discrete. Moreover, if $G_1\cap G_2=\left \{ id \right \}$, then $G$ is the free product of the group $G_1$ and $G_2$.
\end{lem}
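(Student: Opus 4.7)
The plan is to prove both conclusions by the classical ping-pong technique, tracking how alternating words in $G_1$ and $G_2$ act on the disjoint open sets $U_1$ and $U_2$. Since $G=\langle G_1\cup G_2\rangle$, any $g\in G$ admits a reduced alternating expression $g=g_k g_{k-1}\cdots g_1$ in which each $g_i$ is a nontrivial element of $G_1$ or $G_2$ and consecutive letters lie in different subgroups (by combining consecutive same-group letters). An inductive calculation using the ping-pong hypothesis shows that when $g_1\in G_1\setminus\{\mathrm{id}\}$, one has $g_1(U_1)\subset V_2\subset U_2$, then $g_2 g_1(U_1)\subset g_2(U_2)\subset V_1\subset U_1$, then $g_3 g_2 g_1(U_1)\subset g_3(U_1)\subset V_2$, and so on by alternation. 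Hence $g(U_1)$ is contained in $V_1$ or $V_2$ depending on the parity of $k$; since $V_1\subsetneq U_1$ and $V_2\cap U_1\subset U_2\cap U_1=\emptyset$, the image $g(U_1)$ is either a proper subset of $U_1$ or disjoint from it, and in particular $g\neq\mathrm{id}$ in $\mathrm{PU}(2,1)$. The symmetric argument with $U_2$ handles the case $g_1\in G_2\setminus\{\mathrm{id}\}$. When $G_1\cap G_2=\{\mathrm{id}\}$, this nontriviality of every reduced alternating word combined with uniqueness of the reduced form identifies $G$ with the free product $G_1\ast G_2$.

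To upgrade nontriviality to topological discreteness, I would argue by contradiction. Suppose a sequence $g_n\in G\setminus\{\mathrm{id}\}$ satisfies $g_n\to\mathrm{id}$ in $\mathrm{PU}(2,1)$. Passing to a subsequence, we may assume the first-applied letter of every $g_n$ lies in the same subgroup, say $G_1$, so that $g_n(U_1)\subset V_1\cup V_2$ for all $n$. Using that $U_1,U_2$ are disjoint and open, one verifies $\overline{V_2}\cap U_1\subset\overline{U_2}\cap U_1=\emptyset$. Choose a base point $p\in U_1\setminus\overline{V_1}$; then $p$ admits an open neighborhood $\mathcal{N}$ disjoint from $\overline{V_1}\cup\overline{V_2}$. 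Continuity of the $\mathrm{PU}(2,1)$-action on $\mathbf{H}_{\mathbb{C}}^2$ (or on $\partial\mathbf{H}_{\mathbb{C}}^2$) forces $g_n(p)\to p$, so eventually $g_n(p)\in\mathcal{N}$, which contradicts $g_n(p)\in V_1\cup V_2$.

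The main delicate point is the existence of the base point $p\in U_1\setminus\overline{V_1}$: the hypothesis as stated only guarantees $V_1\subsetneq U_1$ as sets, which a priori does not preclude $V_1$ from being dense in $U_1$. In every geometric application here---where each $U_j$ is a half-space bounded by a bisector and $V_j$ is its image under a complex reflection in a $\mathbb{C}$-slice---one actually has $\overline{V_j}\subsetneq U_j$, so such a $p$ is immediate; equivalently, one may read the hypothesis in the stronger, customary form $\overline{V_j}\subsetneq U_j$, and the argument above goes through verbatim.
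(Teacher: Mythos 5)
The paper itself gives no proof of this lemma; it only points to \cite{lx}, \cite{wg} and \cite{mas}. So your proposal has to be judged on its own. The ping-pong half is fine: the alternating-word computation, the use of $V_1\subsetneq U_1$ to rule out words that return $U_1$ into $V_1$, and the symmetric case starting in $G_2$ together give that every nonempty reduced alternating word acts nontrivially, hence $G\cong G_1\ast G_2$ when $G_1\cap G_2=\{\mathrm{id}\}$.

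The gap is the one you flag yourself, and it is a real gap in the proof as written: your discreteness argument needs a point $p\in U_1\setminus\overline{V_1}$, and the hypothesis $V_1\subsetneq U_1$ does not provide one (e.g.\ $V_1=U_1\setminus\{x\}$). But the remedy is not to strengthen the hypothesis to $\overline{V_j}\subsetneq U_j$ --- the lemma is true exactly as stated, and the fix is a small reroute of your own contradiction argument. Suppose $g_n\to\mathrm{id}$ with $g_n\neq\mathrm{id}$; after passing to a subsequence you may assume all $g_n$ fall into one of the four cases $g_n(U_1)\subset V_2$, $g_n(U_1)\subset V_1$, $g_n(U_2)\subset V_1$, $g_n(U_2)\subset V_2$. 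In the ``crossing'' cases ($g_n(U_1)\subset V_2\subset U_2$, say) your argument already works: pick any $p\in U_1$; then $g_n(p)\in U_2$, which is disjoint from the open set $U_1\ni p$, contradicting $g_n(p)\to p$. In the problematic ``returning'' case $g_n(U_1)\subset V_1$, do not try to separate $g_n(p)$ from $p$; instead choose $x\in U_1\setminus V_1$ (which exists purely from $V_1\subsetneq U_1$, no closures needed). Since $x\notin V_1\supset g_n(U_1)$, we get $g_n^{-1}(x)\notin U_1$ for every $n$, while $g_n^{-1}\to\mathrm{id}$ forces $g_n^{-1}(x)\to x$ in the open set $U_1$ --- a contradiction. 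The case $g_n(U_2)\subset V_2$ is symmetric. With this substitution your proof establishes the lemma under its stated hypotheses, so the final paragraph of your proposal (reinterpreting the hypothesis) is unnecessary.
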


\medskip
Based on the standard Klein combination argument, we establish the following lemma.

\begin{lem}\label{lem}
    A complex hyperbolic $[m_1, m_2, m_3]$-triangle group representation with $m_3 > 0$ and angular invariant $\alpha \in [0, \pi]$ is discrete and faithful if the following two conditions are satisfied:  
    \begin{enumerate}
        \item[(1)] $\cos\alpha < \frac{r_1^2+r_2^2}{2r_1r_2r_3}$;
        \item[(2)] the disc $S_3$ does not intersect $(R_2R_1)^n(S_3)$ and $R_1(R_2R_1)^n(S_3)$ for all $n \in \mathbb{Z} \setminus \{ 0 \}$.
    \end{enumerate}
\end{lem}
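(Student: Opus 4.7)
The plan is to derive this lemma directly from the construction outlined in Subsection \ref{sketch} together with Lemma \ref{main lemma}, via Klein's combination theorem. First I would check that the two hypotheses are exactly Assumptions 1 and 2 of Subsection \ref{sketch}: condition (1) is the inequality computed at the end of Section 3 as equivalent to \(|\langle n_3,n_{12}\rangle|^2>1\), i.e.\ to the ultra-parallelism of \(C_3\) and \(C_{12}\); and by Lemma \ref{main lemma}, condition (2) is equivalent to Assumption 2. Thus I am entitled to import from Subsection \ref{technical lemmas} the bisector \(B(C_3)\) having \(C_3\) as a \(\mathbb C\)-slice, disjoint from \(C_{12}\), and satisfying \(\Pi_{12}(B(C_3))=\Pi_{12}(C_3)=S_3\). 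Let \(F_3\subset\Pi_{12}^{-1}(S_3)\) denote one of the two half-spaces cut out by \(B(C_3)\), and \(F_3'\) the other; the reflection \(R_3\) preserves \(B(C_3)\) and interchanges \(F_3\leftrightarrow F_3'\).

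The central geometric observation is that every element of \(G_2=\langle R_1,R_2\rangle\) preserves \(C_{12}\) set-wise (since \(C_{12}\) is the common perpendicular complex geodesic to the mirrors \(C_1\) and \(C_2\)), so such isometries commute with the orthogonal projection \(\Pi_{12}\). For a non-trivial \(g\in G_2\), condition (2) gives \(g(S_3)\cap S_3=\emptyset\), whence
\begin{equation*}
g(F_3)\cap F_3\subset \Pi_{12}^{-1}(g(S_3))\cap \Pi_{12}^{-1}(S_3)=\Pi_{12}^{-1}\bigl(g(S_3)\cap S_3\bigr)=\emptyset,
\end{equation*}
and in fact \(g(F_3)\subset F_3'\). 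I would then set
\begin{equation*}
U_2:=F_3,\quad V_1:=\bigcup_{g\in G_2\setminus\{\mathrm{id}\}} g(F_3),\quad U_1:=V_1\cup D,\quad V_2:=R_3(U_1),
\end{equation*}
where \(D\) is any non-empty proper open subset of \(\mathbf{H}_{\mathbb C}^2\setminus(F_3\cup V_1)\). Then \(V_1\subset F_3'\) forces \(U_1\cap U_2=\emptyset\); \(V_1\subsetneq U_1\) by the non-emptiness of \(D\); and \(V_2=R_3(U_1)\subsetneq R_3(F_3')=F_3=U_2\). The map-into conditions of Lemma \ref{Klein} are then tautological: any non-trivial \(g\in G_2\) sends \(U_2=F_3\) into \(V_1\) by the very definition of \(V_1\), and the unique non-trivial element \(R_3\in G_1=\langle R_3\rangle\) sends \(U_1\) to \(V_2\) by construction.

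Lemma \ref{Klein} then yields discreteness of \(G=\langle R_1,R_2,R_3\rangle\) together with the decomposition \(G=G_1*G_2\) (since \(G_1\cap G_2=\{\mathrm{id}\}\)); combined with the standard free-product description \(G_2=\langle R_1\rangle*\langle R_2\rangle\), which follows from an elementary Klein-combination argument using that \(C_1\) and \(C_2\) are ultra-parallel or asymptotic, we obtain \(G=\langle R_1\rangle*\langle R_2\rangle*\langle R_3\rangle\); this encodes both the discreteness and the faithfulness of \(\rho\). The real work in this proof is not the Klein-combination step above but the geometric input deferred to Subsection \ref{technical lemmas}: constructing the bisector \(B(C_3)\) with \(C_3\) as a \(\mathbb C\)-slice and \(\Pi_{12}(B(C_3))=\Pi_{12}(C_3)\), and identifying the radius \(d_3\) of the disk \(S_3\).
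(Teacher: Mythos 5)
Your proposal is correct and follows essentially the same route as the paper: the paper derives this lemma directly from the Klein-combination setup of Subsection \ref{sketch} (the sets $U_1,U_2,V_1,V_2$ built from the half-space $F_3$ bounded by $B(C_3)$), identifying condition (1) with the ultra-parallelism of $C_3$ and $C_{12}$ and condition (2) with Assumption 2 via Lemma \ref{main lemma}. Your explicit justification that nontrivial $g\in G_2$ send $F_3$ into $F_3'$ --- because $G_2$ preserves $C_{12}$ and hence commutes with $\Pi_{12}$, so disjointness of $g(S_3)$ and $S_3$ lifts to disjointness of $g(F_3)$ and $\Pi_{12}^{-1}(S_3)\supset F_3\cup B(C_3)$ --- is a detail the paper leaves implicit, but it is the intended argument.
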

\begin{proof}
    Condition~(1) in the lemma statement is equivalent to the statement that the two complex geodesics $C_3$ and $C_{12}$ are ultra-parallel, by equation~\eqref{condition on cos alpha}. Let $\Pi_{12}$ denote the orthogonal projection onto the complex geodesic $C_{12}$. Under Condition~(1), Lemma~\ref{C3 existence} asserts that there exists a bisector $\mathcal{B}(C_3)$, with $C_3$ as its slice, such that
$$
\Pi_{12}(\mathcal{B}(C_3)) = \Pi_{12}(C_3) = S_3,
$$
where $S_3$ is an open disc (with respect to the hyperbolic metric) in $C_{12}$, centred at $p_3$ and of radius $d_3$, with $p_3$ and $d_3$ given in equations~\eqref{p3 point 1} and~\eqref{d3}, respectively.

Condition~(2) in the lemma statement is equivalent to the requirement that the open disc $S_3$ does not intersect any of its non-trivial images under the group $G_2 = \langle R_1, R_2 \rangle$. Indeed, the elements of the group $G_2 = \langle R_1, R_2 \rangle$ can be classified into two types:
\(\mathrm{(i)}\) $(R_2 R_1)^n$,
\(\mathrm{(ii)}\) $R_1 (R_2 R_1)^n$,
where $n \in \mathbb{Z}$. Moreover, since the groups under consideration are ultra-parallel triangle groups, the open discs $S_3$ and $R_j(S_3)$ are disjoint for $j = 1, 2$.

Note that the bisector $\mathcal{B}(C_3)$ divides $\mathbf{H}_{\mathbb{C}}^2$ into two components, each of which can serve as a fundamental domain for the group $G_1 = \langle R_3 \rangle$. Let $F_3$ denote the component contained entirely in $\Pi_{12}^{-1}(S_3)$, and let $F_3'$ denote the other component. From these two conditions, we can construct the required four open sets. Define
$$
U_1 = V_1 \cup D, \quad
V_1 = \bigcup_{g \in G_2 \setminus \{\mathrm{id}\}} g(F_3), \quad
U_2 = F_3, \quad
V_2 = R_3(U_1),
$$
where $D$ is any nonempty, proper, open subset of $\mathbf{H}_{\mathbb{C}}^2 \setminus (F_3 \cup V_1)$.
Conditions~(1) and~(2) guarantee that the fundamental domain $F_3$ for $G_1 = \langle R_3 \rangle$ does not intersect any of its non-trivial images under $G_2$, and that $V_1 \subsetneq F_3'$. Clearly, $V_j \subsetneq U_j$ for $j = 1, 2$, and each non-trivial element of $G_1$ (resp.\ $G_2$) maps $U_1$ (resp.\ $U_2$) into $V_2$ (resp.\ $V_1$). This implies that these four open sets satisfy the conditions stated in Lemma~\ref{Klein}, thereby completing the proof of the lemma.
\end{proof}
\medskip
It is not difficult to see that condition~(1) in Lemma~\ref{lem} implies inequality~\eqref{non-degenerate} in Proposition~\ref{angular}.
\medskip

\subsection{A more precise formulation of condition~$(2)$ in Lemma~\ref{lem}}\label{the proof} 

We begin by giving a necessary and sufficient condition for the separation of the discs $S_3$ and $(R_2R_1)^n(S_3)$ for all $n \in \mathbb{Z}\setminus\{0\}$.
\begin{lem}\label{lem 4.3}
    The condition that the open disc $S_3$ does not intersect $(R_2R_1)^n(S_3)$ for all $n \in \mathbb{Z} \setminus \left\{0\right\}$ is equivalent to
\[
a_2 \cos^2 \alpha + a_1 \cos \alpha + a_0 \geq 0,
\]
where the coefficients $a_j$ for $j = 0, 1, 2$ are defined by
\begin{equation}
a_0 = 4r_1^2r_2^2r_3^4 + r_3^2(r_1^2 - r_2^2)^2 - r_1^2 - r_2^2 - r_3^2 + 1,
\label{eq:a0}
\end{equation}

\begin{equation}
a_1 = 2r_1r_2r_3\bigl(1 - 2r_3^2(r_1^2 + r_2^2)\bigr),
\qquad
a_2 = 4r_1^2r_2^2r_3^2.
\label{eq:a1a2}
\end{equation}
\end{lem}

\begin{proof}
Recall that \( S_3 \) is the open disc in the complex geodesic \( C_{12} \), centred at \( p_3 \) with radius \( d_3 \), where \( d_3 \) is given by Lemma~\ref{projection onto C23}.
Specifically, we have
$$
p_3 = 
\begin{pmatrix}
0 \\
s_3r_1 \\
r_1r_3 - r_2 e^{-i \alpha}
\end{pmatrix}, \qquad
\cosh(d_3) = \frac{M + 1}{M - 1},
$$
where $M$ is given in equation~\eqref{value of M}.
Since \(R_2R_1\) (given by equation~\eqref{R2R1}), restricted to \(C_{12}\), is a hyperbolic element, we have that \((R_2R_1)^n(S_3)\) and \(S_3\), for \(n \in \mathbb{Z} \setminus \{0\}\), are disjoint if and only if \(S_3\) and \(R_2R_1(S_3)\) are disjoint. This condition is equivalent to
$$
\cosh^2 \left( \frac{\rho(R_2R_1(p_3), p_3)}{2} \right) - \cosh^2 \left( \frac{2d_3}{2} \right) \geq 0.
$$
After simplification this condition reduces to
\[
a_2 \cos^2 \alpha + a_1 \cos \alpha + a_0 \geq 0,
\]
where the coefficients $a_j$ for $j = 0, 1, 2$ are defined in~\eqref{eq:a0} and~\eqref{eq:a1a2}.
\end{proof}

\medskip

We then give an equivalent statement for the separation of the discs $S_3$ and $R_1 (R_2 R_1)^n (S_3)$ for all $n \in \mathbb{Z} \setminus \{0\}$.

\begin{lem}\label{lem 4.4}
    The following are equivalent:
\begin{enumerate}
    \item[(A)] For all $n \in \mathbb{Z} \setminus \{0\}$, $S_3$ does not intersect $R_1 (R_2 R_1)^n(S_3)$.
    \item[(B)] Both of
    \begin{enumerate}
        \item[(1)] For all $n \in \mathbb{Z}^+$, $C_1$ does not intersect $(R_2 R_1)^n(S_3)$, and
        \item[(2)] For all $n \in \mathbb{Z}^+$, $C_2$ does not intersect $(R_2 R_1)^n(S_3)$
    \end{enumerate}
    hold.
\end{enumerate}
\end{lem}
\begin{proof}
    Since each $R_j$ is a complex reflection with the mirror $C_j$ for $j=1, 2$, we have that
    \begin{equation*}
    \begin{aligned}
        (R_2R_1)^{n}(S_3)\cap C_1=\emptyset 
        &\Leftrightarrow (R_2R_1)^{n}(S_3)\cap R_1 (R_2R_1)^{n}(S_3)=\emptyset\\
        &\Leftrightarrow S_3\cap (R_1R_2)^n R_1 (R_2R_1)^{n}(S_3)=\emptyset\\
        &\Leftrightarrow S_3\cap R_1 (R_2R_1)^{2n}(S_3)=\emptyset,
    \end{aligned}
\end{equation*}
and
\begin{equation*}
    \begin{aligned}
        (R_2R_1)^{n}(S_3)\cap C_2=\emptyset 
        &\Leftrightarrow (R_2R_1)^{n}(S_3)\cap R_2 (R_2R_1)^{n}(S_3)=\emptyset\\
        &\Leftrightarrow S_3\cap (R_1R_2)^n R_2 (R_2R_1)^{n}(S_3)=\emptyset\\
        &\Leftrightarrow S_3\cap R_1 (R_2R_1)^{2n-1}(S_3)=\emptyset,
    \end{aligned}
\end{equation*}
   for $n\in \mathbb{Z}$. Note that the statement that the disc $S_3$ does not intersect $R_1(R_2R_1)^{-1}(S_3)$ holds, because, from above, this is equivalent to the fact that $S_3$ does not intersect $C_2$. 
    Hence, to prove this lemma, it suffices to show that each $(R_2R_1)^{-m}(S_3)$ does not intersect the complex geodesics $C_1$ and $C_2$ for all $m \in \mathbb{Z}^+$.
    
    The complex geodesic $C_{12}$ can be identified with $\mathbf{H}_{\mathbb{C}}^1$. By equation~\eqref{Re(p_3)}, we have $\Re(p_3) > 0$ and $\Im(p_3) \leq 0$. There are three possible cases for the position of the open disc $S_3$:  
    \begin{itemize}
    \item[(i)] The open disc $S_3$ does not intersect the axis $(-1,1)$.  
    \item[(ii)] $S_3 \cap (-1,1) \neq \emptyset$, and the orthogonal projection of $p_3$ onto $(-1,1)$  lies in the interval $[p_2, p_1]$, where $p_j = \Pi_{12}(C_j)$ for $j = 1,2$. Specifically, \( p_1 = s_3/r_3 > 0 \) and \( p_2 = 0 \). Here, $(-1,1)$ denotes the geodesic line with endpoints $-1$ and $1$.  
    \item[(iii)] $S_3 \cap (-1,1) \neq \emptyset$, and the orthogonal projection of $p_3$  onto $(-1,1)$ lies in the geodesic ray $(p_1,1)$, excluding the endpoint $p_1$.  
\end{itemize}
See Figure~\ref{pic}. 


\begin{figure}[htbp]
    \centering
    \begin{tikzpicture}[scale=0.8]
    \def\scale{3.75}
    
    \draw[very thick] (140:6) arc[start angle=140, end angle=220, radius=6];
    
    \draw[very thick] (-40:6) arc[start angle=-40, end angle=40, radius=6];
    
    \coordinate (A) at (140:6);  
    \coordinate (B) at (40:6);   
    \coordinate (C) at (220:6);  
    \coordinate (D) at (-40:6);  
    
     \draw[thick, decorate, decoration={zigzag, segment length=7.5mm, amplitude=0.9375mm}] (A) -- (B);
    \draw[thick, decorate, decoration={zigzag, segment length=7.5mm, amplitude=0.9375mm}] (C) -- (D);
    
    \draw[very thick] (-6,0) -- (6,0);
    
    \node[left] at (-6,0) {$-1$};
    \node[right] at (6,0) {$1$};
    
\draw[fill] (0,0) circle [radius=0.1];
\draw[fill] (2,0) circle [radius=0.1];
\node [above] at (0,0.05) {$p_2$};
\node [above] at (2,0.05) {$p_1$};
\draw [very thick] (1,-1) circle [radius=1.2]; 
\draw[fill] (1,-1) circle [radius=0.1];
\draw [very thick] (4,-0.6) circle [radius=0.8];
\draw[fill] (4,-0.6) circle [radius=0.1];

\draw [very thick] (3,-2.5) circle [radius=0.7];
\draw[fill] (3,-2.5) circle [radius=0.1];

\draw [ultra thick,->] (-2,2)-- (2,2);
\node [above] at (0,2) {$R_1R_2$};
\node at (5,-1.5) {case(iii)};
\node at (-1,-1) {case(ii)};
\node at (1.7,-3) {case(i)};

\draw[very thick,dashed](1,-1)--(1,0);
\draw[very thick,dashed](4,-0.6)--(4,0);
\end{tikzpicture}
\caption{}
\label{pic}
\end{figure}

In all cases, the open disc $S_3$ does not intersect the complex geodesics $C_1$ and $C_2$ because the groups we consider are ultra-parallel triangle groups. According to Lemma~\ref{translation length}, the isometry $R_1R_2$ restricted to the complex geodesic $C_{12}$ is hyperbolic, with axis $(-1,1)$ and translation length $2m_3$. Moreover, it has $1$ as its attractive fixed point and $-1$ as its repulsive fixed point. 
Note also that the hyperbolic distance between $p_1$ and $p_2$ is $m_3$. Hence, each $(R_2R_1)^{-m}(S_3)$ does not intersect the complex geodesics $C_1$ and $C_2$ for all $m \in \mathbb{Z}^+$.
\end{proof}

\medskip

The following result shows that whether the two discs $S_3$ and $R_1 (R_2 R_1)^n (S_3)$ are disjoint depends entirely on the types of specific isometries.

\begin{lem}\label{lem 4.5}
    Condition~$(1)$ (resp. $(2)$) in Lemma~\ref{lem 4.4} is equivalent to the isometries $R_1(R_2R_1)^{2n}R_3$ (resp. $R_1(R_2R_1)^{2n-1}R_3$) are non-elliptic for all $n\in \mathbb{Z^+}$.
\end{lem}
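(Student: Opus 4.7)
The strategy is to recognize $g_m := R_1(R_2R_1)^m$ as a complex reflection whose mirror is perpendicular to $C_{12}$, so that $w^{(m)} = g_m R_3$ becomes a product of two complex reflections whose type is controlled by the relative position of the two mirrors. This geometric position is then shown to coincide with the disk-intersection condition appearing in Lemma \ref{lem 4.4}.

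The first task is to identify $g_m$ as a complex reflection. From $R_j^2 = \mathrm{id}$ one obtains the dihedral identity $R_1(R_2R_1)^k = (R_2R_1)^{-k} R_1$, which yields the explicit conjugacies
\[
g_{2k} = (R_2R_1)^{-k}\, R_1\, (R_2R_1)^k, \qquad g_{2k+1} = (R_2R_1)^{-k}\, (R_1R_2R_1)\, (R_2R_1)^k.
\]
Since $R_1$ and $R_1 R_2 R_1$ are complex reflections about the complex geodesics $C_1$ and $R_1(C_2)$ respectively, both perpendicular to $C_{12}$, and since $(R_2R_1)^{\pm k}$ preserves $C_{12}$ and maps complex geodesics perpendicular to $C_{12}$ to ones of the same kind, $g_m$ is the complex reflection about a complex geodesic $C_{g_m}$ perpendicular to $C_{12}$. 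Writing $q_m := C_{g_m}\cap C_{12}$, we have $C_{g_m} = \Pi_{12}^{-1}(q_m)$. By the standard trichotomy that the product of two complex reflections is elliptic, parabolic, or loxodromic according as the two mirrors meet in $\mathbf{H}_{\mathbb{C}}^2$, are asymptotic, or are ultra-parallel, the element $w^{(m)} = R_{C_{g_m}} R_{C_3}$ is non-elliptic if and only if $C_{g_m}\cap C_3 = \emptyset$ in $\mathbf{H}_{\mathbb{C}}^2$, which in view of $C_{g_m}=\Pi_{12}^{-1}(q_m)$ is equivalent to $q_m \notin S_3 = \Pi_{12}(C_3)$.

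It remains to check that the disk condition $S_3 \cap g_m(S_3) = \emptyset$ says exactly the same thing. Indeed, $g_m|_{C_{12}}$ is the holomorphic involution of the complex geodesic $C_{12}$ fixing $q_m$, hence the half-turn about $q_m$; therefore $g_m(S_3)$ is the open hyperbolic disk of radius $d_3$ centered at $g_m(p_3)$, and $q_m$ is the midpoint of $p_3$ and $g_m(p_3)$. Consequently, $S_3 \cap g_m(S_3) \neq \emptyset$ if and only if $d(p_3,q_m) < d_3$, if and only if $q_m \in S_3$. Combining the two equivalences, $S_3 \cap R_1(R_2R_1)^m(S_3) = \emptyset$ if and only if $w^{(m)}$ is non-elliptic; specializing $m = 2n$ (resp. $m = 2n-1$) with $n \in \mathbb{Z}^+$ and invoking the reformulations of conditions (1) and (2) already carried out in the proof of Lemma \ref{lem 4.4} yields the two statements. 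The main obstacle is the first step---confirming that $g_m$ is a type-A complex reflection (fixing a complex geodesic) rather than a central type-B involution (fixing only a point)---and it is resolved cleanly by the explicit dihedral conjugacies above; once these are in hand, the remaining arguments reduce to routine hyperbolic-plane geometry inside the complex geodesic $C_{12}$.
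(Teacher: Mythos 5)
Your proof is correct and follows essentially the same route as the paper's: both arguments come down to the fact that the product of two order-two complex reflections is non-elliptic exactly when their mirrors are disjoint in $\mathbf{H}_{\mathbb{C}}^2$, combined with the reformulation of conditions (1) and (2) carried out in the proof of Lemma \ref{lem 4.4}. The only difference is bookkeeping: you apply the conjugation by $(R_2R_1)^{-k}$ at the level of mirrors, recognizing $R_1(R_2R_1)^m$ as a complex reflection about $(R_2R_1)^{-k}(C_1)$ or $(R_2R_1)^{-k}R_1(C_2)$, whereas the paper works with the pair $\bigl(C_1,(R_2R_1)^n(C_3)\bigr)$ (resp. with $C_2$) and conjugates the resulting product isometry afterwards.
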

\begin{proof}
    Note that the open disc $S_3$ is the orthogonal projection of $C_3$ onto $C_{12}$. Moreover, by Proposition~2.5 of \cite{bk}, we have
    $$
    \Pi_{12}\circ (R_2R_1)^n = (R_2R_1)^n \circ \Pi_{12},
    $$
    where $\Pi_{12}$ denotes this orthogonal projection.
    Hence, Lemma~\ref{lem 4.4} still holds when $S_3$ is replaced by $C_3$. Therefore, conditions (1) (resp. (2)) in Lemma~\ref{lem 4.4} are equivalent to the complex geodesics $C_1$ (resp. $C_2$) and $(R_2 R_1)^n(C_3)$ being asymptotic or ultra-parallel. This, in turn, is equivalent to  the isometries $R_1(R_2 R_1)^n R_3(R_1 R_2)^n$ (resp.~$R_2(R_2 R_1)^n R_3(R_1 R_2)^n$) being non-elliptic for all $n \in \mathbb{Z}^+$. It is clear that the isometries $R_1(R_2 R_1)^n R_3(R_1 R_2)^n$ and $(R_1 R_2)^n R_1(R_2 R_1)^n R_3 = R_1(R_2 R_1)^{2n} R_3$ are conjugate. Similarly, the isometries $R_2(R_2 R_1)^n R_3(R_1 R_2)^n$ and $R_1(R_2 R_1)^{2n-1} R_3$ are conjugate.
\end{proof}


Lemmas~\ref{lem}, \ref{lem 4.3}, \ref{lem 4.4}, and \ref{lem 4.5} immediately yield the following result.

\begin{prop}\label{proposition}
    A complex hyperbolic $[m_1, m_2, m_3]$-triangle group representation with $m_3>0$ and angular invariant $\alpha \in [0, \pi]$ is discrete and faithful if the following three conditions are satisfied:
    \begin{enumerate}
        \item[(1)] $\cos\alpha < \frac{r_1^2+r_2^2}{2r_1r_2r_3}$;
        \item[(2)] $a_2\cos^2\alpha + a_1\cos\alpha + a_0 \geq 0$,
where the coefficients $a_j$ for $j = 0, 1, 2$ are defined in equations~\eqref{eq:a0} and~\eqref{eq:a1a2}.
        \item[(3)] the isometries $w^{(n)}=R_1(R_2R_1)^{n}R_3$ are non-elliptic for all $n\in \mathbb Z^+$.
    \end{enumerate}
\end{prop}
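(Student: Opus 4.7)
The plan is to chain the four lemmas already proved in this subsection; the work is essentially bookkeeping. The overarching strategy is that Lemma \ref{lem} reduces discreteness and faithfulness, modulo condition (1), to two geometric non-intersection statements on the complex geodesic $C_{12}$: that the open disk $S_3$ is disjoint from $(R_2R_1)^n(S_3)$ and from $R_1(R_2R_1)^n(S_3)$ for every nonzero integer $n$. My task is therefore to verify that conditions (2) and (3) of the present proposition deliver these two families of disjointness.

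First, I would invoke Lemma \ref{lem 4.3}, which translates disjointness of $S_3$ from $(R_2R_1)^n(S_3)$ for all $n \ne 0$ into precisely the quadratic inequality $a_2\cos^2\alpha + a_1\cos\alpha + a_0 \ge 0$. Thus condition (2) directly handles the first family of disjointness conditions. Second, to deal with the $R_1(R_2R_1)^n$ family, I would apply Lemma \ref{lem 4.4} to replace the corresponding disjointness statement by the geometric conditions that neither $C_1$ nor $C_2$ meets $(R_2R_1)^n(S_3)$ for any $n \in \mathbb{Z}^+$; the negative values of $n$ are already absorbed in that lemma via the loxodromic dynamics of $R_1R_2$ restricted to $C_{12}$. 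Then Lemma \ref{lem 4.5} converts these two geometric conditions into the non-ellipticity of $R_1(R_2R_1)^{2n}R_3$ and of $R_1(R_2R_1)^{2n-1}R_3$ for all $n \in \mathbb{Z}^+$, by conjugating the naturally appearing words. Combining the even and odd subfamilies gives exactly the non-ellipticity of $w^{(n)} = R_1(R_2R_1)^n R_3$ for every $n \in \mathbb{Z}^+$, which is condition (3) of the proposition.

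Concatenating these steps verifies both disjointness hypotheses of Lemma \ref{lem}, whence the ultra-parallel triangle group $\langle R_1, R_2, R_3\rangle$ is discrete; simultaneously, the Klein combination argument encoded in Lemma \ref{Klein} produces the free product structure $\langle R_1\rangle \ast \langle R_2\rangle \ast \langle R_3\rangle$, which matches the presentation of $\Gamma$ and so yields faithfulness. The main obstacle in this whole chain is conceptual and has already been confronted in the earlier lemmas, particularly in Lemma \ref{lem 4.4} (where the three geometric configurations of $S_3$ relative to the axis of $R_1R_2$ must be analyzed) and in Lemma \ref{lem 4.5} (where the disjointness statements are rewritten as non-ellipticity via conjugation). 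For the proposition itself, the assembly is purely formal.
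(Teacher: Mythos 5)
Your proposal is correct and follows exactly the paper's route: the paper derives the proposition by the same chaining of Lemma \ref{lem} (which uses condition (1)), Lemma \ref{lem 4.3} (condition (2) handling the $(R_2R_1)^n$ family), and Lemmas \ref{lem 4.4}--\ref{lem 4.5} (condition (3) handling the $R_1(R_2R_1)^n$ family). No gaps.
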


In fact, for a given complex hyperbolic $[m_1, m_2, m_3]$-triangle group representation with angular invariant $\alpha$, it suffices to check the isometry types of $w^{(n)}$ for at most two positive integers $n$ in condition~$(3)$ of Proposition~\ref{proposition}.
In Subsection~\ref{algorithm}, we provide an algorithm to determine which two isometries need to be checked.
\medskip

\subsection{An Algorithm}\label{algorithm}
The complex geodesic \( C_{12} \) is identified with \( \mathbf{H}_{\mathbb{C}}^1 \). Let \( (-1, 1) \) denote the geodesic line with endpoints \( -1 \) and \( 1 \). We define \( v_0 \) as the intersection of \( C_2 \) and \( C_{12} \), and for \( k \in \mathbb{Z} \), let \( v_k \in (-1, 1) \) be the point at a distance of \( |k| m_3 \) from \( v_0 \), where \( k > 0 \) implies \( v_k > 0 \) and \( k < 0 \) implies \( v_k < 0 \). In this notation, the intersection of \( C_1 \) and \( C_{12} \) is denoted by \( v_1 \).

Let \( D_j =(v_{j-1}, v_j] \) denote the geodesic segment with the endpoint \( v_{j-1} \) excluded. Recall that \( \Pi_{(-1, 1)} \) represents the orthogonal projection onto the geodesic line \( (-1, 1) \).


\begin{figure}[htbp]
\centering
\begin{tikzpicture}[scale=0.8]
   \clip (-9,-5) rectangle (11,5);  
    
    \draw[very thick] (140:6) arc[start angle=140, end angle=220, radius=6];
    
    \draw[very thick] (-40:6) arc[start angle=-40, end angle=40, radius=6];
    
    \coordinate (A) at (140:6);  
    \coordinate (B) at (40:6);   
    \coordinate (C) at (220:6);  
    \coordinate (D) at (-40:6);  
    
    \draw[thick, decorate, decoration={zigzag, segment length=7.5mm, amplitude=0.9375mm}] (A) -- (B);
    \draw[thick, decorate, decoration={zigzag, segment length=7.5mm, amplitude=0.9375mm}] (C) -- (D);
    
    \draw[very thick] (-6,0) -- (6,0);
    
    \node[left] at (-6,0) {$-1$};
    \node[right] at (6,0) {$1$};

    \draw[very thick] (0,16.48) ++(250:17.54) arc (250:290:17.54);
  
    \draw[fill] (0,0) circle [radius=0.08];
    \node [above] at (0,0.05) {$v_0$};

    \draw[fill] (1.6,0) circle [radius=0.08];
    \node [above] at (1.4,0.05) {$v_1$};

    \draw[fill] (2.8,0) circle [radius=0.08];
    \node [above] at (2.8,0.05) {$v_2$};

    \draw[fill] (4.2,0) circle [radius=0.08];
    \node [above] at (4.2,0.05) {$v_3$};

    \draw[fill] (5.2,0) circle [radius=0.08];
    \node [above] at (5.2,0.05) {$v_4$};

    \draw[fill] (-1.2,0) circle [radius=0.08];
    \node [above] at (-1.2,0) {$v_{-1}$};

    \draw[fill] (-2.8,0) circle [radius=0.08];
    \node [above] at (-2.8,0) {$v_{-2}$};

    \draw [very thick]  (0.8,-1.03) circle [radius=1.13];
    \draw[fill]  (0.8,-1.03) circle [radius=0.08];

    \draw [very thick]  (3.5,-0.7) circle [radius=0.8];
    \draw[fill]  (3.5,-0.7) circle [radius=0.08];

    \draw [very thick]  (-2,-0.95) circle [radius=1.08];
    \draw[fill]  (-2,-0.95) circle [radius=0.08];

    \draw[very thick,dashed] (-2,-0.95)-- (-2,-0);
    \draw[very thick,dashed] (0.8,-1.03)-- (0.8,0);
    \draw[very thick,dashed] (3.5,-0.7)-- (3.5,0);
    
    \node [below] at (-2,-0.9) {$(R_{2}R_{1})^2(p_3)$};
    \node [below left] at (-2,0.1) {$p_3^{2}$};
    \node [below] at  (0.8,-1.03) {$R_{2}R_{1}(p_3)$};
    \node [below left] at  (0.8,0) {$p_3^{1}$};
    \node [below] at (3.5,-0.7) {$p_3$};
    \node [above] at (3.5,0) {$p_3^{0}$};
\end{tikzpicture}
\caption{Schematic diagram for \( p_3^0 = \Pi_{(-1, 1)}(p_3) \) in \( D_3 \).}
\label{figure 2k+1.png}
\end{figure}

\begin{prop}\label{simplify condition (3)}
    Suppose that \( m_3 > 0 \). If \( \Pi_{(-1, 1)}(p_3) \in D_l \), then condition~(3) in Proposition~\ref{proposition} is equivalent to the isometries \( w^{(l-1)} \) and \( w^{(l-2)} \) being non-elliptic.
\end{prop}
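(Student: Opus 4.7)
The plan is to translate condition (3) of Proposition \ref{proposition} into a geometric condition about whether the axial points $v_k$ lie inside the disk $S_3$, and then apply the hyperbolic Pythagorean theorem to collapse the infinitely many conditions down to the two that involve the $v_k$'s nearest to the foot $\Pi_{(-1,1)}(p_3)$.

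First, I would combine Lemmas \ref{lem 4.4} and \ref{lem 4.5} with Lemma \ref{translation length} to establish the basic dictionary: for every integer $n \geq 1$, the isometry $w^{(n)} = R_1(R_2R_1)^n R_3$ is non-elliptic if and only if $v_{n+1} \notin S_3$. Indeed, Lemma \ref{lem 4.5} equates the non-ellipticity of $w^{(n)}$ (split by parity) to $C_1$ or $C_2$ being disjoint from $(R_2R_1)^m(S_3)$, and Lemma \ref{translation length} identifies $R_2R_1$ on $C_{12}$ with the loxodromic translation of length $2m_3$ that sends $v_k$ to $v_{k-2}$; the two cases then merge into $v_{n+1} \notin S_3$. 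Consequently, condition (3) becomes the statement ``$v_k \notin S_3$ for every integer $k \geq 2$''.

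Second, I would invoke the hyperbolic Pythagorean theorem applied to the right-angled triangle with vertices $p_3$, $\Pi_{(-1,1)}(p_3)$, and $v_k$:
$$\cosh d(p_3, v_k) \;=\; \cosh d\bigl(p_3,\Pi_{(-1,1)}(p_3)\bigr)\,\cosh d\bigl(\Pi_{(-1,1)}(p_3),v_k\bigr).$$
The first factor on the right is independent of $k$, so $d(p_3,v_k)$ is a strictly increasing function of the axial distance $d(\Pi_{(-1,1)}(p_3),v_k)$. Under the hypothesis $\Pi_{(-1,1)}(p_3) \in D_l = (v_{l-1},v_l]$, the nearest axial points are $v_{l-1}$ and $v_l$, and the axial distance from $\Pi_{(-1,1)}(p_3)$ to $v_k$ grows monotonically as $k$ moves away from the band $\{l-1,l\}$ in either direction. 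Hence
$$d(p_3,v_k) \;\geq\; \min\bigl(d(p_3,v_{l-1}),\,d(p_3,v_l)\bigr) \qquad \text{for every } k \in \mathbb{Z},$$
with equality only for $k \in \{l-1,l\}$.

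Third, this monotonicity forces the infinite condition on the $v_k$'s to reduce to the two nearest-neighbour checks. For $l \geq 3$ both $v_{l-1}$ and $v_l$ have index $\geq 2$, so ``$v_k \notin S_3$ for every $k \geq 2$'' is equivalent to ``$v_{l-1},v_l \notin S_3$'', which under the dictionary of Step 1 is precisely ``$w^{(l-2)}$ and $w^{(l-1)}$ are non-elliptic''. For the boundary values $l=1$ or $l=2$, one (or both) of $v_{l-1},v_l$ equals $v_0$ or $v_1$, which automatically lies outside $S_3$ because the triangle group is ultra-parallel (as noted in the proof of Lemma \ref{lem 4.4}); the matching element $w^{(0)} = R_1R_3$ or $w^{(-1)} = R_2R_3$ is a product of two complex reflections in ultra-parallel mirrors and is therefore automatically non-elliptic. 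Thus the equivalence holds uniformly in $l$. The main point requiring care is the sign bookkeeping in Step 1: one must use Lemma \ref{translation length} together with the fact that $R_1,R_2$ act on $C_{12}$ as $\pi$-rotations about $v_1,v_0$ respectively to confirm that $R_2R_1$ translates towards the repulsive fixed point of $R_1R_2$, so that $v_{n+1} \notin S_3 \Leftrightarrow w^{(n)}$ non-elliptic lines up with the convention $\Pi_{(-1,1)}(p_3) \in (v_{l-1},v_l]$; once this is fixed the Pythagorean/monotonicity reduction is essentially one display.
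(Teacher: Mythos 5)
Your proof is correct. The underlying geometric content is the same as the paper's --- both arguments reduce condition (3) of Proposition \ref{proposition} to the statement that none of the evenly spaced points $v_k$ lies in a fixed hyperbolic disk, and then observe that only the two points nearest the foot of the perpendicular from the centre can possibly lie in it --- but your packaging is genuinely different and in places tighter. The paper keeps the disk moving: it splits into cases according to the parity of $l$, translates $S_3$ by $(R_2R_1)^k$ so that its projected centre lands in $D_0$, $D_1$ or $D_2$, reads off which of $v_0$, $v_1$ must be avoided, and disposes of the remaining indices with a short contradiction argument involving $v_{-1}$ (the monotonicity step being justified essentially by reference to the figures). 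You instead keep the disk fixed and move the points, which yields the uniform dictionary ``$w^{(n)}$ is non-elliptic if and only if $v_{n+1}\notin S_3$'', valid for all $n$ and both parities at once; the reduction to the two nearest indices then follows from a single application of the hyperbolic Pythagorean identity $\cosh d(p_3,v_k)=\cosh d\bigl(p_3,\Pi_{(-1,1)}(p_3)\bigr)\cosh d\bigl(\Pi_{(-1,1)}(p_3),v_k\bigr)$. This eliminates the parity case split and makes explicit the nearest-point monotonicity that the paper leaves implicit, and your handling of the boundary cases $l=1,2$ via the automatic non-ellipticity of $w^{(0)}=R_1R_3$ and $w^{(-1)}=R_2R_3$ (products of reflections in ultra-parallel mirrors) is correct and consistent with what the paper's Case 1 with $k=0$ tacitly uses. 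The only point to state carefully in a final write-up is the one you already flag: the identification $(R_1R_2)^n(v_j)=v_{j+2n}$ fixing the sign convention, which checks out against Lemma \ref{translation length}.
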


\begin{proof}
    For convenience, we define
    $$
    p_3^n = \Pi_{(-1, 1)}\left( (R_2 R_1)^n(p_3) \right).
    $$ 
    Suppose that \( \Pi_{(-1, 1)}(p_3) \in D_l \). Using equation~\eqref{Re(p_3)}, we know that \( \Re(p_3) > 0 \), which implies that \( l \) is a positive integer. We now consider two cases, depending on the parity of \( l \).

    \textbf{Case 1.} \( l = 2k + 1 \) for integers \( k \geq 0 \). Since \( \Pi_{(-1, 1)}(p_3) \in D_{2k+1} \) and the isometry \( R_2 R_1 \) restricted to \( C_{12} \) is loxodromic with axis \( (-1, 1) \) and translation length \( 2m_3 \), it follows that \( p_3^k \in D_1 \). 

    According to Lemmas~\ref{lem 4.4} and \ref{lem 4.5}, the condition that the isometries \( w^{(2k)} \) and \( w^{(2k-1)} \) are non-elliptic is equivalent to the requirement that \( (R_2 R_1)^k(S_3) \) does not contain the points \( v_1 \) and \( v_0 \), respectively. This, in turn, implies that \( (R_2 R_1)^n(S_3) \) does not contain any points \( v_j \) for all \( n \in \mathbb{Z} \). 

    By applying Lemma~\ref{lem 4.4}, we conclude that all isometries \( w^{(n)} \) are non-elliptic for \( n \in \mathbb{Z}^+ \). Conversely, if condition~(3) in Proposition~\ref{proposition} holds, then the isometries \( w^{(2k)} \) and \( w^{(2k-1)} \) are necessarily non-elliptic. See Figure~\ref{figure 2k+1.png}.


\begin{figure}[htbp]
   \centering
   \begin{tikzpicture}[scale=0.8]
     \clip (-9,-5) rectangle (11,5);
    \def\scale{3.75}
    
    \draw[very thick] (140:6) arc[start angle=140, end angle=220, radius=6];
    
    \draw[very thick] (-40:6) arc[start angle=-40, end angle=40, radius=6];
    
    \coordinate (A) at (140:6);  
    \coordinate (B) at (40:6);   
    \coordinate (C) at (220:6);  
    \coordinate (D) at (-40:6);  
    
    \draw[thick, decorate, decoration={zigzag, segment length=7.5mm, amplitude=0.9375mm}] (A) -- (B);
    \draw[thick, decorate, decoration={zigzag, segment length=7.5mm, amplitude=0.9375mm}] (C) -- (D);
    
    \draw[very thick] (-6,0) -- (6,0);
    
    \node[left] at (-6,0) {$-1$};
    \node[right] at (6,0) {$1$};

\draw[very thick] (0,16.48) ++(250:17.54) arc (250:290:17.54);
  
\draw[fill] (0,0) circle [radius=0.08];
\node [above] at (0,0.05) {$v_0$};
\draw[fill] (1.3,0) circle [radius=0.08];
\node [above] at (1.3,0.05) {$v_1$};

\draw[fill] (2.9,0) circle [radius=0.08];
\node [above] at (2.9,0.05) {$v_2$};

\draw[fill] (4.1,0) circle [radius=0.08];
\node [above] at (4.1,0.05) {$v_3$};

\draw[fill] (5.3,0) circle [radius=0.08];
\node [above] at (5.3,0.05) {$v_4$};

\draw[fill] (-2,0) circle [radius=0.08];
\node [above] at (-2,0) {$v_{-1}$};

\draw[fill] (2.1,-0.95) circle [radius=0.08];
\draw [very thick] (2.1,-0.95) circle [radius=1.06];

\draw [very thick] (4.7,-0.4) circle [radius=0.5];
\draw[fill] (4.7,-0.4) circle [radius=0.08];

\draw [very thick]  (-1,-1.02) circle [radius=1.2];
\draw[fill]  (-1,-1.02) circle [radius=0.08];

\draw[very thick,dashed] (-1,-1.02)-- (-1,0);
\draw[very thick,dashed] (2.1,-0.95)-- (2.1,0);
\draw[very thick,dashed] (4.7,-0.4)-- (4.7,0);
\node [below] at (-1,-1.02) {$(R_{2}R_{1})^2(p_3)$};
\node [below left] at (-1,0) {$p_3^{2}$};
\node [below] at (2.1,-0.95) {$R_{2}R_{1}(p_3)$};
\node [below left] at (2.1,0) {$p_3^{1}$};
\node [below] at (4.7,-0.4) {$p_3$};
\node [above] at (4.7,0) {$p_3^{0}$};

\end{tikzpicture}
   \caption{Schematic diagram for \( p_3^0 = \Pi_{(-1, 1)}(p_3) \) in \( D_4 \).}
   \label{figure 2k.png}
\end{figure}

    \textbf{Case 2.} \( l = 2k \) for positive integers \( k \). Since the isometry \( R_2 R_1 \) restricted to \( C_{12} \) is loxodromic with axis \( (-1, 1) \) and translation length \( 2m_3 \), it follows that \( p_3^k \in D_0 \) and \( p_3^{k-1} \in D_2 \). 

    According to Lemmas~\ref{lem 4.4} and \ref{lem 4.5}, the condition that the isometry \( w^{(2k-1)} \) (resp. \( w^{(2k-2)} \)) is non-elliptic is equivalent to the requirement that \( (R_2 R_1)^k(S_3) \) (resp. \( (R_2 R_1)^{k-1}(S_3) \)) does not contain the point \( v_0 \) (resp. \( v_1 \)).

    We claim that \( v_{-1} \notin (R_2 R_1)^k(S_3) \). The reason is that if \( v_{-1} \in (R_2 R_1)^k(S_3) \), then we must have \( v_1 \in (R_2 R_1)^{k-1}(S_3) \), as \( R_1 R_2(v_{-1}) = v_1 \). This leads to a contradiction. 

    Since \( (R_2 R_1)^k(S_3) \) does not contain the points \( v_{-1} \) and \( v_0 \), it implies that \( (R_2 R_1)^n(S_3) \) does not contain any points \( v_j \) for all \( n \in \mathbb{Z} \). By applying Lemma~\ref{lem 4.4}, we conclude that all isometries \( w^{(n)} \) are non-elliptic for \( n \in \mathbb{Z}^+ \). Conversely, if condition~(3) in Proposition~\ref{proposition} holds, then the isometries \( w^{(2k-1)} \) and \( w^{(2k-2)} \) are necessarily non-elliptic. See Figure~\ref{figure 2k.png}.

\end{proof}

We then provide the coordinates of the points $v_j$ and $\Pi_{(-1, 1)}(p_3)$.

\begin{lem}
    The coordinates of the points \( v_j \) are given by
    \begin{equation}\label{vj}
        v_{j} = \frac{(r_3+s_3)^j - (r_3-s_3)^j}{(r_3+s_3)^j + (r_3-s_3)^j},\qquad
    \text{for}\,\,  j \in \mathbb{Z}.
    \end{equation}
\end{lem}
\begin{proof}
    This follows from a direct computation, noting that  
\[
    v_{2k} = (R_1 R_2)^k(v_0) \qquad \text{and} \qquad  
    v_{2k+1} = (R_1 R_2)^k(v_1),
\]
where the expression for \( (R_2 R_1)^k \), and hence for \( (R_1 R_2)^k \), is given in Lemma~\ref{translation length}.
\end{proof}
\medskip

\begin{lem}\label{proj of p3}
We have \( \Pi_{(-1, 1)}(p_3) = f(\cos \alpha) \), where
\begin{equation}\label{p3'}
    f(t) = \frac{\sqrt{x_0 - x_1 t} - \sqrt{y_0 - y_1 t}}{\sqrt{x_0 - x_1 t} + \sqrt{y_0 - y_1 t}},
\end{equation}
and
\begin{equation*}
\begin{aligned}
x_0 &= r_1^2 (r_3 + s_3)^2 + r_2^2, \quad & x_1 &= 2 r_1 r_2 (r_3 + s_3), \\
y_0 &= r_1^2 (r_3 - s_3)^2 + r_2^2, \quad & y_1 &= 2 r_1 r_2 (r_3 - s_3).
\end{aligned}
\end{equation*}
Moreover, \( f(t) \) is an increasing function of \( t \), and thus \( \Pi_{(-1, 1)}(p_3) \in [f(-1), f(1)] \).
\end{lem}

\begin{proof}
 We begin by computing the value of \( \Pi_{(-1, 1)}(p_3) \).
 Let \( \mathbf{H}_{\mathbb{R}}^2 \) denote the upper half-plane. The isometry \( F(z) = \frac{z - i}{z + i} \) maps \( \mathbf{H}_{\mathbb{R}}^2 \) to \( \mathbf{H}_{\mathbb{C}}^1 \). A simple computation yields the inverse of \( F \):
 
    \[
    F^{-1}(z) =\frac{z+1}{i(z-1)}.
    \]
    Consequently,
\[
|F^{-1}(z)|
=\frac{|z+1|}{|z-1|}.
\]
Therefore, the orthogonal projection of \( F^{-1}(z) \) onto the geodesic line \( i\mathbb{R}^+ \) is given by \( |F^{-1}(z)| i \), and thus the projection \( \Pi_{(-1, 1)}(z) \) is
    \[
    \Pi_{(-1, 1)}(z) = F(|F^{-1}(z)| i).
    \]
    Explicitly,
    \[
    \Pi_{(-1, 1)}(z) =\frac{|z+1|-|z-1|}{|z+1|+|z-1|}.
    \]
Hence, the value of $\Pi_{(-1,1)}(p_3)$ can be obtained by directly substituting $p_3$ from equation~\eqref{p3 point 2}.

We then consider the function \( f(t) \) defined in \eqref{p3'}.  
Differentiating with respect to \( t \), we obtain
\[
    f'(t) = \frac{x_0 y_1 - x_1 y_0}{\left( \sqrt{x_0 - x_1 t} + \sqrt{y_0 - y_1 t} \right)^2 \sqrt{x_0 - x_1 t}\,\sqrt{y_0 - y_1 t}}.
\]
Clearly, the sign of \( f'(t) \) depends solely on the numerator. Substituting the explicit values of \( x_j \) and \( y_j \) for \( j = 0, 1 \), we get
\[
    x_0 y_1 - x_1 y_0 = 4 r_1 r_2 s_3 (r_1^2 - r_2^2) \geq 0.
\]
Hence, \( f(t) \) is an increasing function of \( t \), and therefore \( \Pi_{(-1, 1)}(p_3) \in [f(-1), f(1)] \).
\end{proof}

Finally, we arrive at the number $l$ in Proposition~\ref{simplify condition (3)}.

\begin{lem}\label{number l}
    We have \( \Pi_{(-1, 1)}(p_3) \in D_l=(v_{l-1}, v_l] \), where
    \[
    l=\mathrm{ceil} \left(\frac{\log\left(1 + \Pi_{(-1, 1)}(p_3)\right) - \log\left(1 - \Pi_{(-1, 1)}(p_3)\right)}{\log\left(r_3 + s_3\right) - \log\left(r_3 - s_3\right)}\right),
    \]
with $\mathrm{ceil}\,(\cdot)$ denoting the ceiling function, and the value of $\Pi_{(-1,1)}(p_3)$ is given by~\eqref{p3'}.
\end{lem}

\begin{proof}
    The inequality  
\[
    v_k = \frac{(r_3 + s_3)^k - (r_3 - s_3)^k}{(r_3 + s_3)^k + (r_3 - s_3)^k} < \Pi_{(-1, 1)}(p_3),
\]
is equivalent to  
\[
    k < \frac{\log\!\big(1 + \Pi_{(-1, 1)}(p_3)\big) - \log\!\big(1 - \Pi_{(-1, 1)}(p_3)\big)}{\log(r_3 + s_3) - \log(r_3 - s_3)}.
\]
This immediately yields the result.
\end{proof}
\medskip

\section{A new family of complex hyperbolic $[m_1, m_2, m_3]$-triangle group representations with $m_3 > 0$}\label{sec m3>0}
In this section, we prove Theorem~\ref{main thm 2} by applying Proposition~\ref{proposition} together with the algorithm presented in Section~\ref{algorithm}. 

We provide the trace of $w^{(n)}=R_1(R_2R_1)^nR_3$ by using the following lemma.

\begin{lem}\label{translation length}
    If $m_3 > 0$, then the isometry $R_2R_1$, when restricted to the complex geodesic $C_{12}$, is hyperbolic and has a translation length of $2m_3$, where $m_3$ is the distance between the complex geodesics $C_1$ and $C_2$. Additionally, we have
    \begin{equation}\label{(R2R1)n}
        (R_2R_1)^n=
        \begin{bmatrix}
 1 & 0 & 0\\
 0 & \frac{(r_3-s_3)^{2n}+(r_3+s_3)^{2n}}{2} & \frac{(r_3-s_3)^{2n}-(r_3+s_3)^{2n}}{2}\\
 0 & \frac{(r_3-s_3)^{2n}-(r_3+s_3)^{2n}}{2} & \frac{(r_3-s_3)^{2n}+(r_3+s_3)^{2n}}{2}
\end{bmatrix}.
    \end{equation}
\end{lem}

\begin{proof}
    Since the isometry $R_2R_1$, when restricted to $C_{12}$, is the product of two involutions about the points $p_2$ and $p_1$, where the points $p_j$ for $j=1,2$ are the orthogonal projections of $C_j$ onto $C_{12}$, the isometry $R_2R_1$ is hyperbolic, with the geodesic line passing through $p_1$ and $p_2$ as its axis and with translation length equal to $2m_3$, where $m_3$ denotes the hyperbolic distance between $p_1$ (resp. $C_1$) and $p_2$ (resp. $C_2$); see the last paragraph of Section~7.34 in \cite{bea}.

    We then compute the matrix expression of $(R_2R_1)^n$. Observe that
    \[
    R_2R_1 =
    \begin{bmatrix}
    1 & 0 & 0 \\
    0 & r_3^2 + s_3^2 & -2r_3 s_3 \\
    0 & -2r_3 s_3 & r_3^2 + s_3^2
    \end{bmatrix}.
    \]
    When restricted to the complex geodesic \( C_{12} \), the isometry \( R_2R_1 \) acts on \( C_{12} \) as
    \[
    R_{21}' =
    \begin{bmatrix}
    r_3^2 + s_3^2 & -2r_3 s_3 \\
    -2r_3 s_3 & r_3^2 + s_3^2
    \end{bmatrix} \in \mathrm{SU}(1,1).
    \]
    Note that
    \[
    A^{-1} (R_{21}')^n A = (A^{-1} R_{21}' A)^n =
    \begin{bmatrix}
    (r_3 - s_3)^{2n} & 0 \\
    0 & (r_3 + s_3)^{2n}
    \end{bmatrix},
    \]
    where
    \[
    A =
    \begin{bmatrix}
    1 & -i \\
    1 & i
    \end{bmatrix}.
    \]
    Hence, we get
    \[
    (R_{21}')^n =
    \begin{bmatrix}
    \frac{(r_3 - s_3)^{2n} + (r_3 + s_3)^{2n}}{2} & \frac{(r_3 - s_3)^{2n} - (r_3 + s_3)^{2n}}{2} \\
    \frac{(r_3 - s_3)^{2n} - (r_3 + s_3)^{2n}}{2} & \frac{(r_3 - s_3)^{2n} + (r_3 + s_3)^{2n}}{2}
    \end{bmatrix},
    \]
    which yields \eqref{(R2R1)n} directly.
\end{proof}
\medskip

A straightforward computation yields
\begin{equation*}
    \mathrm{tr}(w^{(n)})=\frac{A_0'+A_1'+A_2'}{s_3^3},
\end{equation*}
where
\begin{equation*}
\begin{aligned}
A_0'&=(r_3 - s_3)^{2n + 2}\left((2r_1^2r_3^2 - r_1^2 + r_2^2)s_3 + 2r_1^2r_3(r_3^2 - 1)-2r_1r_2s_3(r_3 + s_3)\cos\alpha\right),\\
A_1'&=(r_3 + s_3)^{2n + 2}\left((2r_1^2r_3^2 - r_1^2 + r_2^2)s_3 - 2r_1^2r_3(r_3^2 - 1)-2r_1r_2s_3(r_3 - s_3)\cos\alpha\right),\\
A_2'&=s_3\left(4r_1r_2r_3\cos\alpha - 2r_1^2 - 2r_2^2 - r_3^2 + 1 \right).
\end{aligned}
\end{equation*}
Then, replacing $\cos\alpha$ by $t_n$, and then solving the equation $\mathrm{tr}(w^{(n)})=3$, we obtain 
\begin{equation}\label{tn}
    t_n=\frac{A_0+A_1+A_2}{2r_1r_2s_3\left((r_3+s_3)^{2n+1}+(r_3-s_3)^{2n+1}-2r_3\right)},
\end{equation}
where \( A_0 \), \( A_1 \), and \( A_2 \) are given by
\begin{equation*}\label{A0A1A2}
\begin{aligned}
A_0&=(r_3 - s_3)^{2n + 2}\left((2r_1^2r_3^2 - r_1^2 + r_2^2)s_3 + 2r_1^2r_3(r_3^2 - 1)\right),\\
A_1&=(r_3 + s_3)^{2n + 2}\left((2r_1^2r_3^2 - r_1^2 + r_2^2)s_3 - 2r_1^2r_3(r_3^2 - 1)\right),\\
A_2&=-2s_3(r_1^2 + r_2^2 + 2r_3^2 - 2).
\end{aligned}
\end{equation*}
Notice that if $t_n\le 1$, the isometry $w^{(n)}$ is unipotent parabolic when the angular invariant $\alpha=\arccos(t_n)$; otherwise, the isometry $w^{(n)}$ is always loxodromic for all values of the angular invariant $\alpha$.

We define two kinds of sets.
\begin{defn}\label{defn of Kn}
Let \( r_3>1 \) and a positive integer \( k_0 \) be fixed. 
We define the sets  
\begin{equation}\label{K_n'}
    K_n' = \left\{ (r_1, r_2, \alpha) :\alpha\in[0, \pi],\,1 < r_2 \leq r_1 \,\text{ and }\, t_n \leq t_k \text{ for every } k \neq n \right\},
\end{equation}
where $\alpha$ is the angular invariant, \( n \) and \( k \) are positive integers satisfying \( 1 \leq n \leq k_0 \) and \( 1 \leq k \leq k_0 + 1 \).
The sets \( K_n \) are then defined as the subsets of \( K_n' \) satisfying the following conditions:
\begin{align}
r_1^2 + r_2^2 - 2r_1r_2r_3t_n &\geq 0, \label{eq:first} \\
2r_3^2(r_1^2 + r_2^2) - 4r_1r_2r_3t_n - 1 &\geq 0, \label{eq:second} \\
a_2 t_n^2 + a_1 t_n + a_0 &\geq 0, \label{eq:third} \\
v_{k_0+2} - f(1) &\geq 0. \label{eq:fourth}
\end{align}
Here, \( a_j \), \( v_n \), and \( t_n \) are given in equations~\eqref{eq:a0}, \eqref{eq:a1a2}, \eqref{vj}, and \eqref{tn}, respectively, and the function \( f(t) \) is defined in~\eqref{p3'}.
\end{defn}

We remark that the inequality~\eqref{eq:second} can be omitted if $r_3 \ge \sqrt{\frac{1+\sqrt{2}}{2}}$. The reason is explained below. We observe that the inequalities~\eqref{eq:first} and \eqref{eq:second} are equivalent to
\[
t_n \le \frac{r_1^2 + r_2^2}{2 r_1 r_2 r_3}
\qquad \text{and} \qquad
t_n \le \frac{2 r_3^2 (r_1^2 + r_2^2) - 1}{4 r_1 r_2 r_3},
\]
respectively. Moreover, we have
\begin{equation*}
\begin{aligned}
\frac{r_1^2 + r_2^2}{2 r_1 r_2 r_3}
\le \frac{2 r_3^2 (r_1^2 + r_2^2) - 1}{4 r_1 r_2 r_3}
&\Longleftrightarrow 2 (r_1^2 + r_2^2) (r_3^2 - 1) \ge 1 \\
&\Longleftarrow 4 r_3^2 (r_3^2 - 1) \ge 1 \\
&\Longleftrightarrow r_3 \ge \sqrt{\frac{1+\sqrt{2}}{2}} .
\end{aligned}
\end{equation*}
Therefore, when $r_3 \ge \sqrt{\frac{1+\sqrt{2}}{2}}$, inequality~\eqref{eq:first} implies inequality~\eqref{eq:second}.

We now can give the proof of Theorem~\ref{main thm 2}.
\medskip

\begin{proof}[Proof of Theorem~\ref{main thm 2}]
The ``only if" direction is straightforward. Thus, it suffices to prove the ``if" direction.

Note that the isometry $w^{(n)}=R_1 (R_2 R_1)^n R_3$ is non-elliptic if and only if \(\cos \alpha \le t_n\).
Moreover, if  
\[
t_n \le \frac{r_1^2 + r_2^2}{2 r_1 r_2 r_3},
\]
which is equivalently written as  
\begin{equation*}
    r_1^2 + r_2^2 - 2 r_1 r_2 r_3 t_n \ge 0,
\end{equation*}
then the non-ellipticity of $w^{(n)}$ implies condition~(1) in Proposition~\ref{proposition}.

    We then consider the quadratic function $g(t) = a_2 t^2 + a_1 t + a_0$, where the coefficients $a_j$ for $j = 0, 1, 2$ are defined in equations~\eqref{eq:a0} and~\eqref{eq:a1a2}, and it is clear that $a_2 > 0$.
 The axis of the quadratic function $g(t)$ is the vertical line 
    $$
    t=-\frac{a_1}{2a_2}=\frac{2r_3^2(r_1^2+r_2^2)-1}{4r_1r_2r_3}.
    $$
    The following inequality
    $$
    t_n\le \frac{2r_3^2(r_1^2+r_2^2)-1}{4r_1r_2r_3} 
    $$
    is equivalent to
    \begin{equation}\label{c 2}
        2r_3^2(r_1^2+r_2^2)-4r_1r_2r_3t_n-1\ge0.
    \end{equation}
    Since $t=-\frac{a_1}{2a_2}$ is the axis of symmetry of $g(t)$, we have that if the inequality~\eqref{c 2} holds, then  
\begin{equation*}
    g(\cos\alpha) \geq g(t_n),
\end{equation*}
for $\cos\alpha \leq t_n$.  
Moreover, if we further impose the condition  
\begin{equation*}
    g(t_n) = a_2 t_n^2 + a_1 t_n + a_0 \geq 0,
\end{equation*}
then condition~$(2)$ of Proposition~\ref{proposition} follows.

Finally, we consider inequality~\eqref{eq:fourth}. According to Lemma~\ref{proj of p3}, we have 
$$
\Pi_{(-1, 1)}(p_3) = f(\cos\alpha),
$$
where the function $f(t)$, given in \eqref{p3'}, is increasing. Note that $\Pi_{(-1, 1)}(p_3)$ ranges from $f(-1)$ to, at most, $f(1)$.
If $f(1) \leq v_{k_0+2}$, then, according to Proposition~\ref{simplify condition (3)} and Lemma~\ref{proj of p3}, in order for condition~$(3)$ of Proposition~\ref{proposition} to hold, it suffices to consider at most the isometric types of $w^{(1)}$, $w^{(2)}$, $\dots$, $w^{(k_0)}$ and $w^{(k_0+1)}$. It follows from the construction of $K_n'$ given in \eqref{K_n'} that $w^{(n)}$ becomes elliptic first. Hence, if $f(1) \leq v_{k_0+2}$ and $w^{(n)}$ is non-elliptic, then condition~$(3)$ of Proposition~\ref{proposition} is satisfied.

Therefore, in each $K_n'$, the corresponding complex hyperbolic $[m_1, m_2, m_3]$-triangle group representations, satisfying inequalities~\eqref{eq:first}, \eqref{eq:second}, \eqref{eq:third} and~\eqref{eq:fourth}, are discrete and faithful if and only if $w^{(n)}$ are non-elliptic.
\end{proof}

We set \( r_3 = 1.01 \) (resp. $r_3=1.09$) and \( k_0 = 3 \), and then visualize the sets \( K_n \) for \( n = 1, 2, 3 \), as shown in Figure~\ref{ultra-picture1.png} (resp. Figure~\ref{ultra-picture2.png}). Since the sets $K_n$ are defined by several inequalities, these figures can be generated computationally.
Each set \( K_n \) is divided into two regions: the first corresponds to representations where \( t_n > 1 \), while the second corresponds to those where \( t_n \leq 1 \). It is important to note that the condition \( t_n > 1 \) ensures that the isometry \( w^{(n)} \) is always loxodromic for all values of the angular invariant \( \alpha \). Thus, the representations in the first region (that is, the black region in Figures~\ref{ultra-picture1.png} and~\ref{ultra-picture2.png}) are always discrete and faithful.

\medskip

\section{Discreteness of the complex hyperbolic $[m_1, m_2, 0]$-triangle group representations}\label{sec m3=0}
In this section, we study the complex hyperbolic $[m_1, m_2, 0]$-triangle group representations and prove Theorem~\ref{main thm 3}.
We use the same parameterization as in \cite{mpp}.  After normalization, the normalized polar vectors $n_j$ of the mirrors of the generators $R_j$ can be written as
\begin{equation*}
n_1=
\begin{bmatrix}
\sqrt{2}r_2e^{-i\theta}\\
1 \\
0
\end{bmatrix},\quad
n_2=
\begin{bmatrix}
-\sqrt{2}r_1e^{i\theta}\\
1 \\
0
\end{bmatrix},\quad
n_3=
\begin{bmatrix}
1/\sqrt{2}\\
0 \\
1/\sqrt{2}
\end{bmatrix}.
\end{equation*}
According to Remark \ref{remk angular}, we can restrict the angular invariant $\alpha$ to the interval $[0, \pi]$. Since $\theta = \frac{\pi - \alpha}{2}$ (see the initial paragraph of Section~3 in \cite{mpp}), we have $\theta \in \left[ 0, \frac{\pi}{2} \right]$. We can also express the matrices as:
\begin{equation*}
R_1=
\begin{bmatrix}
 -1 & 2\sqrt{2}\,r_2e^{-i\theta} & 4r_2^2\\
 0 & 1 & 2\sqrt{2}\,r_2e^{i\theta}\\
 0 & 0 & -1
\end{bmatrix},\quad
R_2=
\begin{bmatrix}
 -1 & -2\sqrt{2}\,r_1e^{i\theta} & 4r_1^2\\
 0 & 1 & -2\sqrt{2}\,r_1e^{-i\theta}\\
 0 & 0 & -1
\end{bmatrix},
\end{equation*}
\begin{equation*}
R_3=
\begin{bmatrix}
0  & 0 & 1 \\
0  & -1 & 0\\
1  & 0 & 0
\end{bmatrix},\quad
R_2R_1=
\begin{bmatrix}
 1 & -2\sqrt{2}(r_1e^{i\theta}+r_2e^{-i\theta}) & -4(r_1^2+r_2^2)-8r_1r_2e^{2i\theta}\\
 0 & 1 & 2\sqrt{2}(r_1e^{-i\theta}+r_2e^{i\theta})\\
 0 & 0 & 1
\end{bmatrix},
\end{equation*}
where $R_2R_1$ is the Heisenberg translation by $[2(r_1e^{-i\theta}+r_2e^{i\theta}), -8r_1r_2\sin(2\theta)]$.

In the boundary \( \partial \mathbf{H}_{\mathbb{C}}^2 \), which can be identified with \( \mathcal{H} \cup \{ \infty \} \), the polar vectors \( n_1 \) and \( n_2 \) correspond to the infinite chains
\[
C_1: \zeta = -r_2 e^{i \theta}, \quad C_2: \zeta = r_1 e^{-i \theta},
\]
respectively. The polar vector \( n_3 \) corresponds to the finite chain, which is the unit circle in the plane \( \mathbb{C} \times \{ 0 \} \).

The Cygan sphere of radius 1 centered at \( o = [0, 0] \in \mathcal{H} \), denoted by \( S \), divides \( \partial \mathbf{H}_{\mathbb{C}}^2 \) into two components, either of which can serve as a fundamental domain in \( \partial \mathbf{H}_{\mathbb{C}}^2 \) for the group \( \langle R_3 \rangle \). We denote by \( S_1 \) the vertical projection of \( S \), i.e., the unit disc in the plane \( \mathbb{C} \times \{ 0 \} \), and by \( S^\circ \), the interior of \( S \). Based on a similar argument provided in Section~4.1, the following lemma is obtained.

\begin{lem}
    A complex hyperbolic \( [m_1, m_2, 0] \)-triangle group is discrete and is the free product of the groups \( \langle R_j \rangle \) for \( j = 1, 2, 3 \), if the following conditions are satisfied:
    \begin{enumerate}
        \item[(1)] \( S^\circ \) does not intersect \( (R_2 R_1)^n (S^\circ) \) for all \( n \in \mathbb{Z} \setminus \{ 0 \} \);

        \item[(2)] \( S^\circ \) does not intersect \( R_1 (R_2 R_1)^n (S^\circ) \) for all \( n \in \mathbb{Z} \).
    \end{enumerate}
\end{lem}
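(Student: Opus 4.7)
The plan is to invoke Klein's combination theorem (Lemma~\ref{Klein}) with the splitting $G_1=\langle R_3\rangle$ and $G_2=\langle R_1,R_2\rangle$, using the Cygan sphere $S$ of radius $1$ centered at $o=[0,0]$ as the separating surface for $G_1$. Since the mirror of $R_3$ is the unit chain in $\mathbb{C}\times\{0\}$, the reflection $R_3$ preserves $S$ and swaps its interior $S^\circ$ with its exterior in $\partial\mathbf{H}_{\mathbb{C}}^2=\mathcal{H}\cup\{\infty\}$, so either region is a fundamental domain for $\langle R_3\rangle$.

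The first step is to repackage conditions (1)--(2) into a single disjointness statement. Using $R_1^2=R_2^2=\mathrm{id}$ together with $R_2=R_1(R_2R_1)^{-1}$, every nontrivial reduced word in $R_1,R_2$ has the form $(R_2R_1)^n$ with $n\in\mathbb{Z}\setminus\{0\}$ or $R_1(R_2R_1)^n$ with $n\in\mathbb{Z}$. Conditions (1) and (2) are therefore equivalent to the single statement
\[
S^\circ\cap g(S^\circ)=\emptyset\qquad\text{for every } g\in G_2\setminus\{\mathrm{id}\}.
\]
In particular $V_1:=\bigcup_{g\in G_2\setminus\{\mathrm{id}\}}g(S^\circ)$ is contained in the exterior of $S$.

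Next I would set $U_2=F_3=S^\circ$, $U_1=V_1\cup D$, and $V_2=R_3(U_1)$, where $D$ is any proper open subset of (exterior of $S$)$\setminus V_1$. Such a $D$ exists because both $R_1$ and $R_2$ fix $\infty$ (their matrices in the given parameterization are upper triangular), so every $g\in G_2$ fixes $\infty$ and each $g(S^\circ)$ is a bounded Cygan ball in $\mathcal{H}$; thus any sufficiently small neighborhood of $\infty$ in the exterior of $S$ avoids $V_1$. With these choices the hypotheses of Lemma~\ref{Klein} are routine: $U_1\cap U_2=\emptyset$, $V_j\subsetneq U_j$, the only nontrivial element $R_3\in G_1$ sends $U_1$ onto $V_2\subset S^\circ=U_2$, and each nontrivial $g\in G_2$ sends $U_2=S^\circ$ into $g(S^\circ)\subset V_1$. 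Klein's theorem then yields discreteness of $\langle R_1,R_2,R_3\rangle$; moreover $G_1\cap G_2=\{\mathrm{id}\}$ (since $R_3$ does not fix $\infty$, whereas every element of $G_2$ does), so $\langle R_1,R_2,R_3\rangle=G_2\ast\langle R_3\rangle$. A further application of Klein's theorem inside $G_2$, with fundamental half-spaces bounded by bisectors whose $\mathbb{C}$-slices are the mirrors $C_1$ and $C_2$, refines this to $\langle R_1\rangle\ast\langle R_2\rangle\ast\langle R_3\rangle$.

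The argument is essentially bookkeeping once (1)--(2) have been rewritten as a single orbit-disjointness condition, and otherwise mirrors the template of Section~\ref{sketch}. The one point that warrants care is the second application of Klein's theorem inside $G_2$: the chains $C_1$ and $C_2$ are asymptotic at $\infty$ (because $m_3=0$) rather than ultra-parallel, so the paired half-spaces must be chosen with $\infty$ as a common parabolic fixed point; this is a standard Schottky-type construction and does not impede the argument.
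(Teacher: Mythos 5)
Your argument is essentially the paper's own proof: the same splitting $G_1=\langle R_3\rangle$, $G_2=\langle R_1,R_2\rangle$, the same four sets $\mathcal{U}_1,\mathcal{U}_2,\mathcal{V}_1,\mathcal{V}_2$ built from the unit Cygan ball $S^\circ$ and its $G_2$-orbit, and the same appeal to Lemma~\ref{Klein}. The extra bookkeeping you supply (the reduced-word classification showing that conditions (1)--(2) exhaust $G_2\setminus\{\mathrm{id}\}$, the existence of the set $\mathcal{D}$, and the ping-pong refinement giving $G_2=\langle R_1\rangle\ast\langle R_2\rangle$) only makes explicit what the paper leaves implicit.
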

\medskip

The images of \( C_1 \) and \( C_2 \) under the vertical projection are the points
$$
\mathrm{p}_1 = -r_2 e^{i\theta} \quad \text{and} \quad
\mathrm{p}_2 = r_1 e^{-i\theta},
$$
respectively. Let \( \mathcal{L} \subset \mathbb{C} \times \{0\} \) be the straight line passing through \( \mathrm{p}_1 \) and \( \mathrm{p}_2 \). We denote the points on this line by
$$
\mathrm{v}_k = \mathrm{p}_2 + k(\mathrm{p}_1 - \mathrm{p}_2) = r_1 e^{-i\theta} - k(r_2 e^{i\theta} + r_1 e^{-i\theta}),
$$
where \( k \) is an integer. Under this notation, we have \( \mathrm{v}_0 = \mathrm{p}_2 \) and \( \mathrm{v}_1 = \mathrm{p}_1 \). For convenience, we denote the infinite chains whose vertical projections are \( \mathrm{v}_k \) by \( \mathcal{C}_k \). It is clear that
$$
R_2 R_1 (\mathcal{C}_k) = \mathcal{C}_{k-2}.
$$
Since \( r_1 \ge r_2 \), it follows that in the plane \( \mathbb{C} \times \{0\} \), the Euclidean projection of the center of \( S_1 \) (i.e., the origin \( o \)) onto the straight line \( \mathcal{L} \) must lie in the segment \( [\mathrm{v}_k, \mathrm{v}_{k+1}] \) for some integer \( k \ge 0 \). See Figure~\ref{figure m3.png}.

At this point, we are back to a similar case as discussed in Section 4. Here, the straight line \( \mathcal{L} \) and the points \( \mathrm{v}_k \) play the same role as the axis \( (-1, 1) \) and the points \( v_k \) defined in Section 4, respectively. The proof of Lemma~\ref{[m1, m2, 0] lem} is not essentially different from the proofs of Lemmas~\ref{lem 4.4} and \ref{lem 4.5}, except that the bisector is replaced by a Cygan sphere. Therefore, we omit the details of the proof.


\begin{figure}[htbp]
    \centering
    \begin{tikzpicture}[scale=0.8]
        \draw [very thick,->] (-4.5,0)-- (4.5,0);
        \draw [very thick,->] (0,-4.5)-- (0,4.5);
        \node [left] at (0,4.5) {$y$};
        \node [below right] at (4.5,0) {$x$};
        \node [above right] at (0,0) {$o$};
        \draw [very thick]  (0,0) circle [radius=1.8];
      \node [above] at (1.42,1.43) {$S_1$};
    \draw[very thick,dashed] (-3,3.5)-- (0.5,-5.25);
     \draw[very thick,dashed] (-1.38,-0.55)-- (0,0);  
\node [left] at (-3,3.7) {$\mathcal{L}$};
 \draw[fill] (-2.7,2.75) circle [radius=0.08];
\draw[fill] (-1.9,0.75) circle [radius=0.08];
\draw[fill] (-0.9,-1.75) circle [radius=0.08];
\draw[fill] (-0.2,-3.5) circle [radius=0.08];
 \draw[fill] (0.3,-4.75) circle [radius=0.08];
\node [left] at (-3,3.7) {$\mathcal{L}$};
\node [left] at (-2.7,2.75) {$v_4$};
 \node [left] at (-1.9,0.75) {$v_3$};
 \node [left] at (-0.9,-1.75) {$v_2$};
 \node [left] at (-0.2,-3.5) {$v_1=p_1=-r_2e^{i\theta}$};
  \node [right] at (0.3,-4.75) {$v_0=p_2=-r_1e^{-i\theta}$};
    \end{tikzpicture}
    \caption{}
    \label{figure m3.png}
\end{figure}

\begin{lem}\label{[m1, m2, 0] lem}
    The complex hyperbolic $[m_1, m_2, 0]$-triangle group representations are discrete and faithful if the following conditions are satisfied:
    \begin{enumerate}
        \item[(1)] $S^\circ$ does not intersect $(R_2R_1)^n(S^\circ)$ for all $n\in \mathbb{Z^+}$;

        \item[(2)] the isometries $w^{(n)}=R_1(R_2R_1)^nR_3$ are non-elliptic for all $n\in \mathbb{Z^+}$.
    \end{enumerate}
\end{lem}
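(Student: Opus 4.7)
The plan is to apply Lemma~\ref{[m1, m2, 0] lem}, which reduces discreteness and faithfulness to verifying its two geometric conditions on the Cygan sphere $S^\circ$. The ``only if'' direction is immediate: if $w^{(n)}$ is elliptic, then either $w^{(n)}$ has finite order (contradicting faithfulness, since no such relation holds in the free product $\langle \iota_1\rangle * \langle \iota_2\rangle * \langle \iota_3\rangle$), or it generates a non-discrete cyclic subgroup of $\mathrm{PU}(2,1)$, contradicting discreteness. Thus it suffices to prove the ``if'' direction by verifying conditions (1) and (2) of Lemma~\ref{[m1, m2, 0] lem}.

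First I would establish condition (2), reducing ``$w^{(k)}$ non-elliptic for every $k \in \mathbb{Z}^+$'' to the single assumption ``$w^{(n)}$ non-elliptic'' for the specific $n$ in the hypothesis. In the Heisenberg boundary, the Euclidean foot of the origin $o$ onto the line $\mathcal{L}$ through $\mathrm{p}_1, \mathrm{p}_2$ lies in a unique segment $[\mathrm{v}_k, \mathrm{v}_{k+1}]$. An explicit calculation of the affine coordinate $\lambda$ of this foot in terms of $r_1, r_2, \cos(2\theta)$, followed by taking extremal values as $\theta$ varies over the allowed range, translates the location constraint $\lambda \in [k, k+1]$ into precisely the inequalities $\tfrac{2}{n} \le \tfrac{r_1^2 - 1}{r_2^2 - 1} - 1 \le \tfrac{2}{n-1}$. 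A monotonicity argument parallel to the proof of Proposition~\ref{simplify condition (3)} --- using that $R_2 R_1$ translates in the Heisenberg group and cyclically shifts the chains $\mathcal{C}_k$ along $\mathcal{L}$ --- then shows that, among all $w^{(k)}$, the first to become elliptic as $\alpha$ decreases is $w^{(n)}$. Consequently, non-ellipticity of $w^{(n)}$ implies condition (2).

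Second, I would verify condition (1). Since $R_2 R_1$ is the left Heisenberg translation by $[\zeta_0, t_0] = [2(r_1 e^{-i\theta} + r_2 e^{i\theta}),\, -8 r_1 r_2 \sin(2\theta)]$ and left translations preserve the Cygan metric, disjointness of $S^\circ$ from $(R_2 R_1)^m(S^\circ)$ for all positive $m$ reduces to
\[
\rho_0\bigl(o,(R_2R_1)^m(o)\bigr) \;=\; \bigl(m^4 |\zeta_0|^4 + m^2 t_0^2 \bigr)^{1/4} \;\ge\; 2,
\]
whose minimum over $m \in \mathbb{Z}^+$ is attained at $m = 1$. Expanding and substituting $\alpha = \pi - 2\theta$, this collapses to a linear inequality in $\cos\alpha$, say $\cos\alpha \le c^{(1)}(r_1, r_2)$. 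On the other hand, solving $\mathrm{tr}(w^{(n)}) = 3$ in the current $m_3 = 0$ parameterization yields a threshold $t_n(r_1, r_2)$ such that $w^{(n)}$ is non-elliptic exactly when $\cos\alpha \le t_n$. I would then verify by direct algebraic manipulation that the displayed quadratic inequality in $n$ is precisely the condition $t_n \le c^{(1)}$. Once this is shown, non-ellipticity of $w^{(n)}$ forces $\cos\alpha \le t_n \le c^{(1)}$, so condition (1) follows automatically, completing the verification of the hypotheses of Lemma~\ref{[m1, m2, 0] lem}.

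The main obstacle is the algebraic identification in this last step: computing $\mathrm{tr}(w^{(n)})$ when $R_2 R_1$ is a unipotent Heisenberg translation (so iterates grow polynomially rather than hyperbolically in $n$), solving for $t_n$, substituting into the linear disjointness inequality $t_n \le c^{(1)}$, and verifying that the resulting relation simplifies to $-n^2\bigl((r_1^2 - r_2^2)^2 + 1\bigr) + n\bigl((r_1^2 + r_2^2)^2 - 4 r_2^4 - 1\bigr) - 2(r_1^2 + r_2^2)(r_2^2 - 1) \ge 0$. The asymmetry of the coefficients in $r_1$ and $r_2$ reflects the fact that $n$ is pinned by the geometric range constraint rather than freely chosen, and making the two aspects of the hypothesis consistent via an honest calculation --- while carefully tracking the cancellations between the horizontal part $|\zeta_0|^2$ and the vertical twist $t_0$ of the Heisenberg translation --- is where the bulk of the technical work will lie.
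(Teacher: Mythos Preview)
You are proving the wrong statement. The lemma you are asked to establish is Lemma~\ref{[m1, m2, 0] lem} itself: that conditions (1) and (2) on the Cygan sphere $S^\circ$ are \emph{sufficient} for discreteness and faithfulness. Your proposal instead \emph{assumes} this lemma in its first sentence and proceeds to verify its hypotheses under the additional numerical constraints of Proposition~\ref{main prop 2} --- the integer range for $n$ and the quadratic inequality in $n$. In other words, you have written a proof sketch for Proposition~\ref{main prop 2}, not for Lemma~\ref{[m1, m2, 0] lem}. Everything you discuss about the threshold $t_n$, the disjointness bound $c^{(1)}$, and the final algebraic identification belongs to the argument \emph{after} Lemma~\ref{[m1, m2, 0] lem}, not to its proof.

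What the paper actually does for Lemma~\ref{[m1, m2, 0] lem} is quite different and more elementary: it invokes the Klein combination setup of the preceding lemma (with $G_1=\langle R_3\rangle$, $G_2=\langle R_1,R_2\rangle$, and the four open sets built from translates of $S^\circ$), and then argues, in parallel with Lemmas~\ref{lem 4.4} and~\ref{lem 4.5}, that the remaining disjointness requirement --- $S^\circ$ disjoint from $R_1(R_2R_1)^n(S^\circ)$ for all $n\in\mathbb{Z}$ --- is equivalent to the infinite chains $C_1$, $C_2$ avoiding the translates $(R_2R_1)^n(S^\circ)$, which in turn is equivalent to every $w^{(n)}$ being non-elliptic. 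The geometry here uses only that $R_2R_1$ is a Heisenberg translation shifting the chains $\mathcal{C}_k$ along the line $\mathcal{L}$, together with the position of the foot of $o$ on $\mathcal{L}$; no trace computations, no thresholds $t_n$, and no quadratic inequality enter at this stage.
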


Subsequently, we provide more precise descriptions of condition~$(1)$ and $(2)$ in Lemma~\ref{[m1, m2, 0] lem}.

\begin{lem}\label{Cygan distant}
    Condition~(1) in Lemma~\ref{[m1, m2, 0] lem} is satisfied if \( h(\cos \alpha) \ge 0 \), where
    \begin{equation*}
    h(t) = -4r_1r_2(r_1^2 + r_2^2)t + r_1^4 + 6r_1^2r_2^2 + r_2^4 - 1.
    \end{equation*}
\end{lem}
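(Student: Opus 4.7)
The plan is to translate condition~(1) directly into a lower bound on Cygan distances between centers of unit Cygan balls. Since $S$ is the Cygan sphere of radius $1$ centered at $o=[0,0]$ and $\rho_0$ is a metric, the open unit Cygan balls $S^\circ$ and $(R_2R_1)^n(S^\circ)$ are disjoint exactly when $\rho_0(o,(R_2R_1)^n(o))\ge 2$. The explicit matrix for $R_2R_1$ written earlier in this section shows that $R_2R_1$ is the Heisenberg translation $T_{[\zeta_0,t_0]}$ with
\[
\zeta_0 = 2(r_1e^{-i\theta}+r_2e^{i\theta}),\qquad t_0 = -8r_1r_2\sin(2\theta).
\]
From the Heisenberg group law, $T_{[\zeta,t]}^n=T_{[n\zeta,nt]}$ (every cross term $2\Im(k\zeta\cdot\overline{\zeta})$ vanishes), hence $(R_2R_1)^n(o)=[n\zeta_0,nt_0]$.

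Next I would compute from the definition of the Cygan metric that
\[
\rho_0(o,(R_2R_1)^n(o))^4 = n^4|\zeta_0|^4 + n^2 t_0^2.
\]
Because $n^4$ and $n^2$ are both nondecreasing on $\mathbb{Z}^+$ and $|\zeta_0|^4,t_0^2\ge 0$, the right-hand side is minimized at $n=1$. Therefore the family of conditions in~(1) collapses to the single inequality $|\zeta_0|^4+t_0^2\ge 16$.

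It then remains to simplify this inequality. Substituting $|\zeta_0|^2 = 4(r_1^2+r_2^2+2r_1r_2\cos(2\theta))$ and $t_0^2=64r_1^2r_2^2\sin^2(2\theta)$, and using $\cos(2\theta)=-\cos\alpha$ and $\sin^2(2\theta)=1-\cos^2\alpha$ which follow from $\theta=(\pi-\alpha)/2$, the $\cos^2\alpha$ contributions cancel and one obtains
\[
|\zeta_0|^4+t_0^2 = 16\bigl[r_1^4+6r_1^2r_2^2+r_2^4-4r_1r_2(r_1^2+r_2^2)\cos\alpha\bigr].
\]
Thus $|\zeta_0|^4+t_0^2\ge 16$ is precisely $h(\cos\alpha)\ge 0$, establishing the lemma. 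The only mildly subtle point is the cancellation of the $\cos^2\alpha$ terms in the expansion; everything else is straightforward bookkeeping using the formulas already set up in this section.
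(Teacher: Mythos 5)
Your proposal is correct and follows essentially the same route as the paper: reduce condition (1) to the single case $n=1$ using the Heisenberg translation structure of $R_2R_1$, impose $\rho_0(o,R_2R_1(o))\ge 2$, and simplify via $\theta=(\pi-\alpha)/2$ to obtain $h(\cos\alpha)\ge 0$. The only cosmetic caveat is that you claim disjointness holds ``exactly when'' $\rho_0\ge 2$, whereas only the sufficient direction (which follows from the triangle inequality for the Cygan metric) is needed and is what the paper uses.
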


\begin{proof}
    The isometry \( R_2 R_1 \) acts on \( \partial \mathbf{H}_{\mathbb C}^2 \) as a Heisenberg translation. Hence, the condition that \( S^\circ \) does not intersect \( (R_2 R_1)^n(S^\circ) \) for all \( n \in \mathbb{Z}^+ \) is equivalent to the condition that \( S^\circ \) does not intersect \( R_2 R_1(S^\circ) \). A direct computation shows that the center of the Cygan sphere \( R_2 R_1(S) \) is given by
   $$
    R_2 R_1(o) =
    [2(r_1 e^{-i\theta} + r_2 e^{i\theta}), -8r_1r_2 \sin(2\theta)] \in \mathcal{H}.
    $$ 
    Therefore, \( S^\circ \) does not intersect \( R_2 R_1(S^\circ) \) if
    $$
    \rho_0^4(o, R_2 R_1(o)) - (1 + 1)^4 \ge 0,
    $$
    which simplifies to
    $$
    8r_1r_2(r_1^2 + r_2^2) \cos^2 \theta + r_1^4 - 4r_1^3 r_2 + 6r_1^2 r_2^2 - 4r_1 r_2^3 + r_2^4 - 1 \ge 0.
    $$
    Substituting \( \theta = \frac{\pi - \alpha}{2} \), we obtain the equivalent condition
    $$
    -4r_1 r_2 (r_1^2 + r_2^2) \cos \alpha + r_1^4 + 6r_1^2 r_2^2 + r_2^4 - 1 \ge 0.
    $$
\end{proof}

According to \cite{mpp}, if \( m_2 > 0 \), then the pair \( (r_1, r_2) \) corresponds to the following:
\begin{equation*}
    (X, Y) = \left( \frac{r_1^2 - 1}{r_2^2 - 1} - 1, \frac{1}{r_2^2 - 1} \right).
\end{equation*}
Equivalently, we have
\begin{equation}\label{(r1, r2)}
    (r_1, r_2) = \left( \sqrt{1 + \frac{X + 1}{Y}}, \sqrt{1 + \frac{1}{Y}} \right).
\end{equation}
We define the following sets.
\begin{defn}\label{defn of KN}
The set $\mathcal{K}_n'$ is defined by
\begin{equation}\label{KKn}
\mathcal{K}_n' = \left\{ (X, Y, \alpha) \mid \frac{2}{n} \le X \le \frac{2}{n - 1},\, Y > 0, \alpha\in[0, \pi] \right\},
\end{equation}
for \( n \in \mathbb{Z}^+ \), where the inequality \( X \le \frac{2}{n - 1} \) is omitted in the case \( n = 1 \).
The set $\mathcal{K}_n$ is then defined as the subset of $\mathcal{K}_n'$ satisfying the additional condition
\begin{equation}\label{Kn condition}
- n^2 (X^2 + Y^2) + n \bigl( X^2 - Y^2 + 4 (Y + 1) X \bigr) - 2 X - 4 Y - 4 \ge 0.
\end{equation}
\end{defn}

\medskip

\begin{lem}\label{wn before}
    If 
    $$
    X=\frac{r_1^2-1}{r_2^2-1}-1\in \left[\frac{2}{n}, \frac{2}{n-1}\right],
    $$
    where $r_2>1$,
    then the isometry $w^{(n)}$ becomes elliptic before $w^{(m)}$ for any $m\in \mathbb{Z^+}\setminus \left\{n \right\}$.
\end{lem}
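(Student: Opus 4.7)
The plan is to compute the trace of $w^{(n)} = R_1 (R_2 R_1)^n R_3$ explicitly, solve for the critical value $t_n$ of $\cos\alpha$ at which $w^{(n)}$ becomes unipotent parabolic (trace equal to $3$), and then show that $t_n \le t_m$ for every positive integer $m \neq n$ precisely when $X \in \mathcal{K}_n$. Since the trace we shall obtain is a strictly decreasing linear function of $\cos\alpha$, the isometry $w^{(n)}$ first transitions into ellipticity (as $\cos\alpha$ increases toward $1$) at $\cos\alpha = t_n$, so the statement that $w^{(n)}$ becomes elliptic before $w^{(m)}$ is equivalent to the inequality $t_n \le t_m$.

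Using that $R_2 R_1$ is the Heisenberg translation by $[2(r_1 e^{-i\theta} + r_2 e^{i\theta}), -8 r_1 r_2 \sin(2\theta)]$ and that the $n$th power of a Heisenberg translation $[\zeta, t]$ equals the translation by $[n\zeta, nt]$, I would write $(R_2 R_1)^n$ as an explicit upper triangular matrix. A direct computation of $R_1 (R_2 R_1)^n R_3$, together with the substitution $\cos(2\theta) = -\cos\alpha$ (coming from $\theta = (\pi - \alpha)/2$), yields
\begin{equation*}
\mathrm{tr}(w^{(n)}) = 4 n^2 (r_1^2 + r_2^2) - 8 n(n+1) r_1 r_2 \cos\alpha + (8n+4) r_2^2 - 1.
\end{equation*}
Setting this equal to $3$ and solving for $\cos\alpha$ gives the closed form
\begin{equation*}
t_n = \frac{n^2 r_1^2 + (n+1)^2 r_2^2 - 1}{2 n (n+1) r_1 r_2}.
\end{equation*}

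It then remains to analyse how $t_n$ varies with $n$. A short algebraic manipulation produces
\begin{equation*}
t_{n+1} - t_n = \frac{n(r_1^2 - r_2^2) - 2(r_2^2 - 1)}{2 n (n+1)(n+2) r_1 r_2},
\end{equation*}
and since the denominator is positive, this has the sign of $n - \frac{2(r_2^2 - 1)}{r_1^2 - r_2^2} = n - \frac{2}{X}$. Hence $\{t_n\}_{n \ge 1}$ is strictly decreasing for $n < 2/X$ and strictly increasing for $n > 2/X$, so its minimum is attained at the positive integer $n^*$ satisfying $n^* - 1 \le 2/X \le n^*$. Rewriting this bracketing as $\frac{2}{n^*} \le X \le \frac{2}{n^* - 1}$ (with the upper bound dropped when $n^* = 1$) shows $n^* = n$ exactly when $X \in \mathcal{K}_n$, which completes the proof. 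The main obstacle is the trace computation, where care is needed to track both the real part $-n^2 |\zeta|^2$ and the imaginary part $int$ of the $(1,3)$-entry of $(R_2 R_1)^n$; once the closed form for $t_n$ is obtained, the remaining monotonicity analysis is a routine one-variable exercise.
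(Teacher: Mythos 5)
Your proposal is correct and follows essentially the same route as the paper: compute $\mathrm{tr}(w^{(n)})$ (your formula agrees with the one the paper imports from Lemma 8 of \cite{mpp}), solve $\mathrm{tr}=3$ for $t_n$, and show $t_n$ is minimal over $n$ exactly when $2/n\le X\le 2/(n-1)$ via the sign of consecutive differences. The only cosmetic differences are that you derive the trace directly from the Heisenberg-translation form of $(R_2R_1)^n$ rather than citing \cite{mpp}, and you read off the sign of $t_{n+1}-t_n$ from $n-2/X$ instead of substituting the $(X,Y)$ coordinates as the paper does; both yield the identical bracketing of $2/X$.
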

\begin{proof}
    According to Lemma~$8$ of \cite{mpp}, we have
    $$
    \mathrm{tr}(w^{(n)})=4(nr_1-(n+1)r_2)^2-1+8n(n+1)r_1r_2(1-\cos\alpha).
    $$
    A simple computation shows that when 
    \begin{equation}\label{tn m3=0}
    \cos\alpha=t_n=\frac{(r_1^2+r_2^2)n^2+r_2^2(2n+1)-1}{2r_1r_2n(n+1)},
    \end{equation}
    the isometry $w^{(n)}$ is unipotent parabolic, equivalently, 
    $$
    \mathrm{tr}(w^{(n)})=3.
    $$
    The isometry $w^{(n)}$ becomes elliptic before $w^{(m)}$ where $n\ne m$ if and only if $t_n\le t_m$. We compute that
    \begin{equation}\label{H(n)}
        t_{n-1}-t_{n}=\frac{n(r_2^2-r_1^2)+r_1^2+r_2^2-2}{2r_1r_2n(n^2-1)},
    \end{equation}
    for $n\ge 2$.
    Substituting 
    $$
    r_1=\sqrt{1+\frac{X+1}{Y}}\qquad \text{and} \qquad
    r_2=\sqrt{1+\frac{1}{Y}},
    $$
    where $X\ge0$ and $Y>0$,
    into the right hand side of the equality~\eqref{H(n)}, we obtain
    $$
    t_{n-1}-t_{n}=\frac{X(1-n)+2}{2n(n^2-1)\sqrt{(1+Y)(1+X+Y)}}.
    $$
    Hence, the isometry $w^{(n)}$ becomes elliptic before $w^{(n-1)}$ if and only if
    \begin{equation}\label{X}
        X\le \frac{2}{n-1},
    \end{equation}
    where $n\in \mathbb{Z^+}$ and $n\ge 2$. 
    
    We claim that if 
    $$
    \frac{2}{n}\le X\le \frac{2}{n-1},
    $$
    then the isometry $w^{(n)}$ becomes elliptic before $w^{(m)}$ for any positive integers $m\ne n$. If $n=1$, the inequality $X\le \frac{2}{n-1}$ should be omitted. Suppose that $m_0$ is any positive integer satisfying $m_0 > n$. According to \eqref{X}, if $X\geq\frac{2}{n}$,  
then $t_n\leq t_{n+1}$. Applying inequality~\eqref{X} again, we obtain that if  
$$
X\geq \frac{2}{n}\geq\frac{2}{n+1},
$$  
then $t_n\leq t_{n+1}\leq t_{n+2}$. Repeating this process iteratively, after a finite number of steps, we have  
$$
t_n\leq t_{n+1}\leq t_{n+2}\leq \dots \leq t_{m_0-1}\leq t_{m_0},
$$  
if  
$$
X\geq \frac{2}{n}\geq\frac{2}{n+1}\geq \dots \geq \frac{2}{m_0-1}.
$$  
Similarly, we deduce that $t_n\leq t_{n-1}\leq \dots \leq t_1$ if $X\leq\frac{2}{n-1}$. Thus, the result follows.
\end{proof}

According to Lemma~\ref{wn before}, within each region \( \mathcal{K}_n' \), condition~(2) in Lemma~\ref{[m1, m2, 0] lem} is equivalent to the isometry \( w^{(n)} \) being non-elliptic, that is, \( \cos \alpha \leq t_n \). It is straightforward to observe that the function \( h(t) \) defined in Lemma~\ref{Cygan distant} decreases as \( t \) increases. Therefore, we get
\[
h(t) \geq h(t_n) = \frac{-n^2 \left( (r_1^2 - r_2^2)^2 + 1 \right) + n \left( (r_1^2 + r_2^2)^2 - 4 r_2^4 - 1 \right) - 2 (r_1^2 + r_2^2)(r_2^2 - 1)}{n(n+1)},
\]
for \( t = \cos \alpha \leq t_n \). This implies that within each \( \mathcal{K}_n' \), by Lemma~\ref{Cygan distant}, the condition \( h(t_n) \geq 0 \) ensures condition~(1) in Lemma~\ref{[m1, m2, 0] lem}. 
Moreover, using \eqref{(r1, r2)}, the inequality \( h(t_n) \ge 0 \) is equivalent to  inequality~\eqref{Kn condition} in Definition~\ref{defn of KN}. 
Thus, combining this result with the discussion above, we conclude the proof of Theorem~\ref{main thm 3}

\medskip

\section{The limit of a subset of $K_n$ as $r_3$ tends to $1$}\label{sec 7}
We begin by defining a subset of $K_n$ for $n \ge 2$.

\begin{defn}\label{Kn''}
For $n \ge 2$, using the $(X, Y)$-coordinates, the set $\tilde{K}_n$ is defined as the subset of $K_n$ (see Definition~\ref{defn of Kn}) that satisfies the additional conditions
$$
\frac{2}{n} \le X \le \frac{2}{n-1}
\quad \text{and} \quad
nX - n(n+1)Y - 1 \ge 0.
$$
\end{defn}

Our main result in this section is the following:
\begin{prop}
Suppose that $\mathcal{P}_n$, defined in equation~\eqref{Pn}, is the set obtained in Proposition~1 of \cite{mpp}, where $n\in \mathbb{Z}^+$. Then the following statements hold:
\begin{enumerate}
\medskip
\item[(a)] For any given positive integer $n \ge 2$, the set $\mathcal{P}_n$ is the limit of $\tilde{K}_n$ as $r_3$ tends to $1$;
\medskip
\item[(b)] Any compact subset of $\mathcal{P}_1$ (in the $(X,Y)$-coordinates) is the limit of a compact subset of $K_1$ as $r_3$ tends to $1$.
\end{enumerate}
\end{prop}

\begin{proof}
 We first consider~$\mathrm{(a)}$. By equation~\eqref{tn}, the quantity $t_n$ tends to the value given in~\eqref{tn m3=0} as $r_3$ tends to $1$. Consequently, by Lemma~\ref{wn before}, the set $\mathcal{K}_n'$ (see equation~\eqref{KKn}) is the limit of the semi-analytic set $K_n'$ (see Definition~\ref{defn of Kn}), independently of the choice of the integer $k_0 \ge n$.

Since the set $\tilde{K}_n$ is semi-analytic, its limit is a subset of $\mathcal{K}_n'$ consisting of points that satisfy inequalities~\eqref{eq:first}, \eqref{eq:second}, and \eqref{eq:third}, after replacing $t_n$ in equation~\eqref{tn} by its limiting value given in~\eqref{tn m3=0} and setting $r_3 = 1$. We show below that inequality~\eqref{eq:fourth} can be omitted when $r_3$ is sufficiently close to $1$.

Specifically, by using~\eqref{(r1, r2)}, inequalities~\eqref{eq:first}, \eqref{eq:second}, and \eqref{eq:third} become
\begin{align}
X &\ge \frac{1}{n}, \label{eqq:first} \\
2nX - n(n+1)Y - 2 &\ge 0, \label{eqq:second} \\
nX - n(n+1)Y - 1 &\ge 0, \label{eqq:third}
\end{align}
respectively. Evidently, the inequality $X \ge \frac{2}{n}$ implies~\eqref{eqq:first}, and inequality~\eqref{eqq:third} implies~\eqref{eqq:second}. Therefore, for given $n \ge 2$, the limit of the semi-analytic set $\tilde{K}_n$ is precisely the set $\mathcal{P}_n$.

We now discuss inequality~\eqref{eq:fourth} when $r_3$ is close to $1$. Define
\[
H'(r_1, r_2, r_3, n) = v_n - f(1).
\]
Since $v_{k_0+2} \ge v_{n+2}$, it follows that if $H'(r_1, r_2, r_3, n+2) \ge 0$, then inequality~\eqref{eq:fourth} holds. Under the coordinate transformation~\eqref{(r1, r2)}, we denote $H'(r_1, r_2, r_3, n+2)$ by $H(X, Y, r_3, n+2)$. A direct computation yields 
\begin{equation}\label{H expression}
    \begin{aligned}
      H(X, Y, r_3, n+2)
      =\frac{2 |E| \left(r_3 + s_3 \right)^{n+2}-2 F \left(r_3 - s_3 \right)^{n+2}}
      {\left(|E|+F \right)\left(\left(r_3 - s_3 \right)^{n+2} +\left(r_3 + s_3 \right)^{n+2}\right)},
    \end{aligned}
\end{equation}
where
\[
E=\left(r_3 - s_3 \right)\sqrt{\frac{X + Y + 1}{Y}}-\sqrt{\frac{Y + 1}{Y}},\quad
F=\left(r_3 + s_3 \right)\sqrt{\frac{X + Y + 1}{Y}}-\sqrt{\frac{Y + 1}{Y}}.
\]
It is evident that $H(X, Y, 1, n+2) = 0$. From the definition of the set $\tilde{K}_n$, we always have $\frac{2}{n} \le X \le \frac{2}{n-1}$ and
\begin{equation}\label{Y inequality}
0<Y \le \frac{nX-1}{n(n+1)}
\le \frac{\frac{2n}{n-1}-1}{n(n+1)}
= \frac{1}{n(n-1)},
\end{equation}
for $n \ge 2$.
It is not difficult to see that $E \ge 0$ if and only if
$$
(r_3 - s_3)^2X + \left((r_3 - s_3)^2 - 1\right)(Y + 1) \ge 0.
$$
Since $X$ admits positive lower and upper bounds and $Y$ admits an upper bound, this implies that $E \ge 0$ when $r_3$ is sufficiently close to $1$.
Clearly, the sign of $H(X, Y, r_3, n+2)$ is determined entirely by its numerator. After simplification, the numerator becomes
\[
\left(\left(r_3 + s_3 \right)^{n+1}-\left(r_3 - s_3 \right)^{n+1}\right)
\left(\sqrt{\frac{X + Y + 1}{Y}}-2 s_3 \sqrt{\frac{Y + 1}{Y}}\right),
\]
which is positive for $m_3$ sufficiently small.

For~$\mathrm{(b)}$, let $\mathcal{CP}_1$ be an arbitrary compact subset of $\mathcal{P}_1$. This is equivalent to saying that $\mathcal{CP}_1$ is a subset of $\mathcal{P}_1$ satisfying an additional compactness condition on $(X, Y)$. We then define a compact subset $CK_1$ of $K_1$ as the set of points satisfying the same compactness condition on $(X, Y)$.

The argument for conclusion~$\mathrm{(b)}$ is essentially the same as that for~$\mathrm{(a)}$. 
Here, the compactness condition on $(X, Y)$ guarantees the existence of positive lower and upper bounds for $X$, as well as an upper bound for $Y$. This, in turn, implies that $E \ge 0$ in equation~\eqref{H expression} when $r_3$ is sufficiently close to $1$. Consequently, the limit of the semi-analytic set $CK_1$ is the set $\mathcal{CP}_1$.   
\end{proof}

\medskip

\section*{Acknowledgments}
We would like to express our sincere gratitude to the referees for their valuable suggestions. As a result of these insightful comments, the manuscript has been significantly improved.

This work was supported by the Guangdong Basic and Applied Basic Research Foundation (No. 2025A1515011486) and the National Natural Science Foundation of China (No.12271148).

\end{document}